\newtheorem{lem}{Lemma}
\newtheorem{thm}{Theorem}
\newcommand{\half}{\text{\scriptsize $\frac{1}{2}$}}
\newcommand{\tr}[1]{\text{Tr}\left(#1\right)}
\newcommand{\MM}{{\mathbb M}}
\newcommand{\CC}{{\mathbb C}}
\newcommand{\KK}{{\mathbb K}}
\newcommand{\DD}{{\mathbb D}}
\newcommand{\II}{{\mathbb I}}
\newcommand{\EE}[1]{{\text{\large$\mathbb E$}}\left(#1\right)}
\newcommand{\trr}[1]{\text{Tr}^{2}\left(#1\right)}
\newcommand{\bra}[1]{\left<#1\right|}
\newcommand{\ket}[1]{\left|#1\right>}
\newcommand{\bket}[1]{\left<#1\right>}
\newcommand{\nmax}{n^{\text{\tiny max}}}
\newcommand{\PP}[1]{{\text{\large$\mathbb P$}}\left(#1\right)}
\newcommand{\RR}{{\mathbb R}}
\newcommand{\aaa}{{\text{\bf a}}}
\newcommand{\nnn}{{\text{\bf N}}}
\newcommand{\rhop}{\rho^{\text{\tiny pred}}}
\newcommand{\rhope}{\rho^{\text{\tiny pred,est}}}
\newcommand{\rhoe}{\rho^{\text{\tiny est}}}
\newcommand{\chie}{\chi^{\text{\tiny est}}}
\newcommand{\XX}{{\mathcal X}}
\newcommand{\Inv}{{\mathcal I}}
\newtheorem{rem}{Remark}
\newtheorem{prop}{Proposition}
\begin{document}

\title{Stabilization of a delayed quantum system:
\\
the photon box case-study\thanks{This work was   partially supported  by the "Agence Nationale de la Recherche" (ANR),
    "Projet Blanc"  CQUID number 06-3-13957 and "Projet Jeunes Chercheurs" EPOQ2 number ANR-09-JCJC-0070.}}

\author{
  Hadis Amini \thanks{Mines ParisTech: {\tt hadis.amini@mines-paristech.fr}}
  \and
  Mazyar Mirrahimi\thanks{INRIA Paris-Rocquencourt:  {\tt mazyar.mirrahimi@inria.fr} }
\and
  Pierre Rouchon\thanks{Mines ParisTech:  {\tt pierre.rouchon@mines-paristech.fr}}
       }
\date{}
\maketitle
\begin{abstract}
We study  a feedback scheme to stabilize an arbitrary photon number state in a microwave cavity. The quantum non-demolition measurement of the cavity state allows a non-deterministic preparation of Fock states. Here, by the mean of a controlled field injection, we desire to make this preparation process deterministic. The system evolves through a discrete-time Markov process and we design the feedback law applying Lyapunov techniques. Also, in our feedback design we take into account an unavoidable pure delay and we compensate it by a stochastic version of a Smith predictor. After illustrating the efficiency of the proposed feedback law through simulations, we provide a rigorous proof of the global stability of the closed-loop system  based on tools from stochastic stability analysis. A brief study of the Lyapunov exponents of the linearized system around the target state gives a strong indication of the robustness of the method.

\end{abstract}
\section{Introduction}

In the aim of achieving a robust processing of quantum information, one of the main tasks is to prepare and to protect various quantum states. Through the last 15 years, the application of quantum feedback paradigms has been investigated by many physicists~\cite{wiseman-94,vitali-95,doherty-jacobs-99,vanhandel:ieee05,mirrahimi-handel:siam07} as a possible solution for this robust preparation.
However, most (if not all) of these efforts have remained at a theoretical level and have not been able to be give rise to successful experiments. This is essentially due to the necessity of simulating, in parallel to the system, a quantum filter~\cite{belavkin-92} providing an estimate of the state of the system based on the historic of quantum jumps induced by the measurement process. Indeed, it is, in general, difficult to perform such simulations in real time. In this paper, we consider a prototype of physical systems, the photon-box, where we actually have the time to perform these computations in real time (see~\cite{dotsenko-et-al:PRA09} for a detailed description of this  cavity quantum electrodynamics system).

Taking into account the measurement-induced quantum projection postulate, the most practical measurement protocols in the aim of feedback control are the quantum non-demolition (QND) measurements~\cite{braginsky-vorontosov:75,thorne-et-al:78,Unruh-78}. These are the measurements which preserve the value of the measured observable. Indeed, by considering a well-designed QND measurement process where the quantum state to be prepared is an eigenstate of the measurement operator, the measurement process, not only, is not an obstacle for the state preparation but can even help by adding some controllability.

In~\cite{deleglise-et-al:nature08,guerlin-et-al:nature07,gleyzes-et-al:nature07} QND  measures are exploited to detect and/or produce highly non-classical states of light trapped  in a super-conducting cavity (see~\cite[chapter 5]{haroche-raimond:book06} for a  description of such QED  systems and~\cite{brune-et-al:PhRevA92} for detailed physical models with QND measures of light using atoms). For such experimental setups, we detail and analyze  here a  feedback scheme that stabilizes the cavity field towards any photon-number  states (Fock states). Such states are  strongly  non-classical since  their photon numbers are   perfectly defined. The  control corresponds to a  coherent light-pulse injected inside the cavity between atom passages. The overall structure of the proposed feedback scheme is inspired by~\cite{geremia:PRL06} using a  quantum adaptation of the observer/controller structure  widely used for classical  systems (see, e.g.,~\cite[chapter 4]{kailath-book}). As the measurement-induced quantum jumps and the controlled field injection happen in a discrete-in-time manner, the observer part of the proposed feedback scheme consists in a discrete-time quantum filter. Indeed, the discreteness of the measurement process provides us a first prototype of quantum systems where we, actually, have enough time to perform the quantum filtering and to compute the measurement-based feedback law to be applied as the controller.

From a mathematical modeling point of view, the quantum filter evolves through a discrete-time Markov chain.  The estimated state is used  in a state-feedback, based on a Lyapunov design. Indeed, by considering a natural candidate for the Lyapunov function, we propose a feedback law which ensures the decrease of its expectation over the Markov process. Therefore, the value of the  considered Lyapunov function over the Markov chain defines a super-martingale. The convergence analysis of the closed-loop system is, therefore, based on some rather classical tools from stochastic stability analysis~\cite{kushner-71}.

One of the particular features of the system considered in this paper corresponds to a non-negligible delay in the feedback process. In fact, in the experimental setup considered through this paper, we have to take into account a delay of $d$ steps between the measurement process and the feedback injection. Indeed, there are, constantly,  $d$ atoms flying between the photon box (the cavity) to be controlled and the atom-detector (typically $d=5$). Therefore, in our feedback design, we do not have access to the measurement results for the $d$ last atoms. Through this paper, we propose an adaptation of the quantum filter, based on a stochastic version of the Smith predictor~\cite{smith-58}, which takes into account this delay by predicting the actual state of the system without having access to the result of  $d$ last detections.

In the next section, we describe briefly the physical system and the associated quantum Monte-Carlo model.
In Section~\ref{sec:open-loop}, we consider the dynamics of the open-loop system. We will prove, through theorem~\ref{thm:OpenLoop} that the QND measurement process, without any additional controlled injection, allows a non-deterministic preparation of the Fock states. Indeed, we will see that the associated Markov chain converges, necessarily, towards a Fock state and that the probability of converging towards a fixed Fock state is given by its population over the initial state. Also, through proposition~\ref{prop:OpenLoopLin}, we will show that the linearized open-loop system around a fixed Fock state admits strictly negative Lyapunov exponents (see Appendix~\ref{append:Lyap-exp} for a definition of the Lyapunov exponent).

In Section~\ref{sec:closed-loop}, we propose a Lyapunov-based feedback design allowing to stabilize globally the delayed closed-loop system  around a desired Fock state. The theorem~\ref{thm:ClosedLoop} proves the almost sure convergence of the trajectories of the closed-loop system towards the target Fock state. Also, through proposition~\ref{prop:ClosedLoopLin}, we will prove that the linearized closed-loop system around the target Fock state admits strictly negative Lyapunov exponents.

Finally in Section~\ref{sec:filter}, we propose a brief discussion on the considered quantum filter and by proving a rather general separation principle (theorem~\ref{thm:separation}), we will show a semi-global robustness with respect to the knowledge of the initial state of the system. Also, through a  brief analysis of the linearized system-observer around the target Fock state and applying the propositions~\ref{prop:OpenLoopLin} and~\ref{prop:ClosedLoopLin}, we show that its largest Lyapunov exponent is also strictly negative (proposition~\ref{prop:filterlin}).

A preliminary version of this paper without delay  has appeared as a conference paper~\cite{mirrahimi-et-al:cdc09}. The delay compensation scheme is borrowed from~\cite{dotsenko-et-al:PRA09}.  The authors thank
M. Brune,  I. Dotsenko, S. Haroche and J.M. Raimond from ENS for many interesting discussions and advices.

\section{A discrete-time  Markov process}\label{sec:model}

\begin{figure}[htb]
 \centerline{\includegraphics[width=0.8\textwidth]{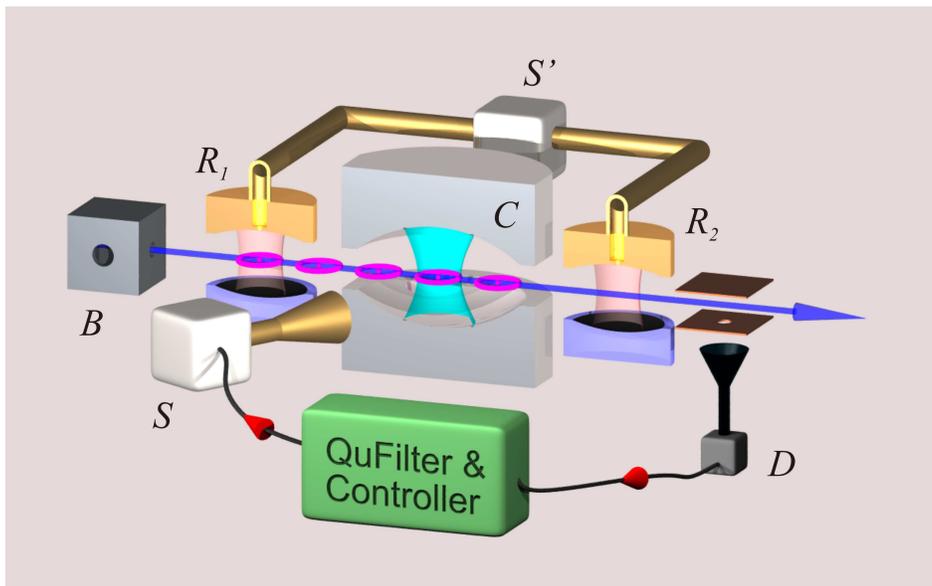}}
  \caption{The  quantum electrodynamic setup including  the microwave cavity $C$ with its  feedback scheme (in green).}\label{fig:ExpScheme}
\end{figure}
As illustrated by Figure~\ref{fig:ExpScheme},
the system consists in $C$ a high-Q microwave cavity, $B$ a box producing Rydberg atoms, $R_1$ and $R_2$ two low-Q Ramsey
cavities, $D$ an atom detector and $S$ a microwave source. The dynamics model is discret in time and relies on quantum Monte-Carlo trajectories (see~\cite[chapter 4]{haroche-raimond:book06}).
Each time-step indexed by the integer $k$ corresponds to  atom number $k$ coming from $B$, submitted then to a first Ramsey $\pi/2$-pulse in $R_1$,  crossing the cavity $C$  and being entangled with it, submitted to a second $\pi/2$-pulse in $R_2$  and finally being measured in $D$. The state of the cavity is associated  to a quantized mode. The control corresponds to a coherent displacement of amplitude $\alpha \in\CC$  that is applied  via the micro-wave source $S$ between two atom passages.

 In this paper we consider a finite dimensional approximation of this quantized mode  and take a truncation to $\nmax$ photons. Thus the cavity  space is approximated by the Hilbert space $\CC^{\nmax+1}$. It admits  $(\ket{0}, \ket{1}, \ldots, \ket{\nmax})$ as ortho-normal basis. Each basis vector $\ket{n}\in \CC^{\nmax+1}$ corresponds to  a pure state, called Fock state, where the cavity has exactly $n$~photons, $n\in\{0,\ldots,\nmax\}$. In this Fock-states basis the number operator $\nnn$ corresponds to the diagonal matrix
 $$
 \nnn=\text{diag}(0,1, \ldots, \nmax).
 $$
 The annihilation operator truncated to $\nmax$ photons is denoted by $\aaa$. It  corresponds  to the upper $1$-diagonal matrix filled with $(\sqrt{1}, \ldots, \sqrt{\nmax})$:
 $$
 \aaa \ket{0} = 0, \quad \aaa \ket{n} = \sqrt{n} \ket{n-1} \text{~for~} n=1,\ldots,\nmax
 $$
 The  truncated creation operator denoted by  $\aaa^\dag$ is the Hermitian conjugate of $\aaa$. Notice that we still have $\nnn=\aaa^\dag\aaa$, but truncation does not preserve the usual commutation $[\aaa,\aaa^\dag]=1$ that is only valid when $\nmax=+\infty$.

Just after the measurement of the atom number $k-1$, the state of the cavity is described  by the density matrix $\rho_{k}$ belonging to  the following set of well-defined density matrices:
\begin{equation}\label{eq:XX}
\XX=\left\{\rho\in{\mathbb C^{(\nmax+1)^2}}\quad| \quad\rho=\rho^\dag,~\tr{\rho}=1,~\rho\geq 0\right\}.
\end{equation}
The random evolution of this   state $\rho_k$ can be modeled through a discrete-time  Markov process that will be described bellow (see~\cite{dotsenko-et-al:PRA09} and the references therein explaining the physical modeling assumptions).

 Let us denote by  $\alpha_k\in\CC$ the control at step $k$. Then $\rho_{k+1}$, the cavity state after measurement of atom $k$ is given by
\begin{equation}\label{eq:main}
\rho_{k+1}=\MM_{s_k}(\rho_{k+\half}), \quad \rho_{k+\half}=\DD_{\alpha_{k-d}}(\rho_k)
\end{equation}
 where,
\begin{itemize}
\item $s_k\in\{g,e\}$, $\MM_{g}(\rho)=\tfrac{M_{g}\rho M_{g}^\dag}{\tr{M_{g}\rho M_{g}^\dag}}$, $\MM_{e}(\rho)=\tfrac{M_{e}\rho M_{e}^\dag}{\tr{M_{e}\rho M_{e}^\dag}}$
 with operators  $M_g=\cos\left(\varphi_0+{\vartheta}\nnn\right)$ and $M_e=\sin\left(\varphi_0+{\vartheta}\nnn\right)$ ($\varphi_0,{\vartheta}$ constant parameters). For any $n\in\{0,\ldots,\nmax\}$ we set  $\varphi_n= \varphi_0+n {\vartheta}$.
 \item $\DD_{\alpha}(\rho)=D_\alpha\rho D_{\alpha}^\dag$ where the unitary displacement operator $D_\alpha$ is
given by $ D_\alpha=e^{\alpha \aaa^\dag - \alpha^*\aaa}.$ In open-loop, $\alpha=0$, $D_0=\II$ (identity operator) and
$\DD_0(\rho)=\rho$. Notice that $D_\alpha^\dag = D_{-\alpha}$.

\item  $s_k$ is  a random variable taking the value $g$ when the atom $k$ is detected in $g$ (resp. $e$ when the atom $k$ is detected in $e$) with probability
\begin{equation}\label{eq:PgPe}
P_{g,k}=\tr{M_g\rho_{k+\half} M_g^\dag}\quad \left( \text{resp.}~P_{e,k}=\tr{M_e\rho_{k+\half} M_e^\dag} \right).
\end{equation}

\item The control elaborated at step $k$,  $\alpha_{k}$,   is subject  to  a delay of $d$ steps, $d$ being the number of flying atoms between the cavity $C$ and the detector $D$.
\end{itemize}
We will assume through out the paper that
the  parameters $\varphi_0$, $\vartheta$ are chosen in order to have  $M_g$, $M_e$ invertible and  such that the  spectrum of $M_g^\dag M_g= M_g^2$ and $M_e^\dag M_e= M_e^2$   are not degenerate.
This implies that the  nonlinear operators $\MM_g$ and $\MM_e$ are well defined for all $\rho\in \XX$ and that $\MM_g(\rho)$ and $\MM_e(\rho)$ belongs also to the state space $\XX$ defined by~\eqref{eq:XX}.
Notice that $M_g$ and $M_e$ commute, are diagonal in the Fock basis and satisfy  $M_g^\dag M_g+M_e^\dag M_e=\II$.
The Kraus map  associated to this Markov process is given by:
\begin{equation}\label{eq:kraus}
\KK_{\alpha}(\rho) = M_g D_\alpha \rho D_{\alpha}^\dag M_g^\dag + M_e D_\alpha \rho D_{\alpha}^\dag M_e^\dag
.
\end{equation}
It corresponds to the  expectation value of $\rho_{k+1}$ knowing $\rho_k$ and $\alpha_{k-d}$:
\begin{equation}\label{eq:expectation}
\EE{\rho_{k+1}~ | ~\rho_k,\alpha_{k-d}}
= \KK_{\alpha_{k-d}}(\rho_k)
.
\end{equation}

\section{Open loop dynamics}\label{sec:open-loop}

\subsection{Simulations}
\begin{figure}
\centerline{\includegraphics[width=0.6\textwidth]{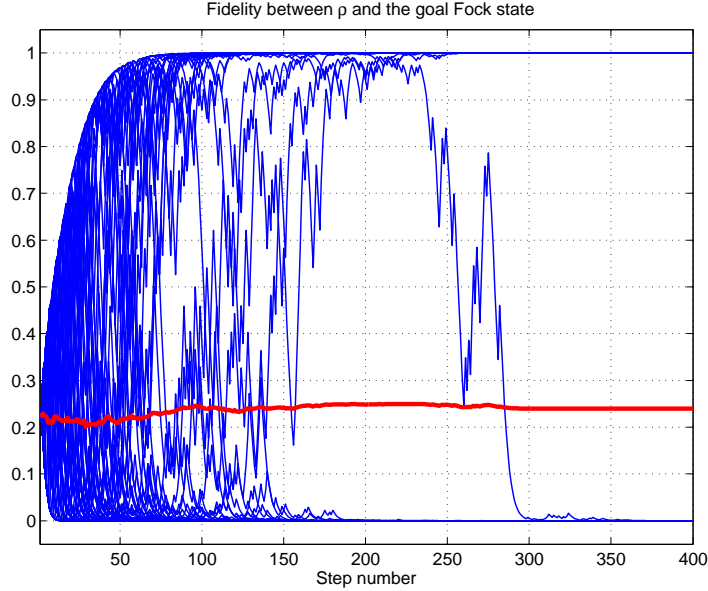}}
  \caption{ $\bket{3|\rho_k|3}$ (fidelity with respect to the $3$-photon state)  versus the number of passing atoms $k\in\{0, \ldots, 400\}$ for 100 realizations of  the open-loop Markov process~\eqref{eq:OpenLoop} (blue fine curves)  starting from the same  coherent state $\rho_0=\DD_{\sqrt{3}}(\ket{0} \bra{0})$. The ensemble average over these realizations corresponds to the thick red curve.}
   \label{fig:OpenLoop}
\end{figure}
We consider in this section the following dynamics
\begin{equation}\label{eq:OpenLoop}
\rho_{k+1}=\MM_{s_k}(\rho_k),
\end{equation}
obtained from~\eqref{eq:main} when $\alpha_{k-d}\equiv 0$.
 Figure~\ref{fig:OpenLoop} corresponds to 100 realizations of this Markov process with
$\nmax = 10$ photons, ${\vartheta}= \tfrac{2}{10}$ and $\varphi_0=\tfrac{\pi}{4}-3{\vartheta} $.  For each realization,  $\rho_0$ is initialized to the the same coherent state $\DD_{\sqrt{3}}(\ket{0}\bra{0})$  with $\tr{\nnn\rho_0}\approx 3$ as mean photon number. We observe that either $\bket{3|\rho_k|3}$ tends to $1$ or $0$. Since the ensemble average curve is almost  constant,  the proportion  of trajectories for which $\bket{3|\rho_k|3}$ tends to $1$  is  given approximatively  by $\bket{3|\rho_0|3}$.

\subsection{Global convergence analysis}
 The following theorem underlies the observations made for  simulations of Figure~\ref{fig:OpenLoop}
\begin{thm}\label{thm:OpenLoop}
Consider the Markov process  $\rho_k$ obeying~\eqref{eq:OpenLoop} with an initial condition $\rho_0\in \XX$ defined by~\eqref{eq:XX}. Then
\begin{itemize}
    \item  for any $n \in\{0,\cdots, \nmax\}$, $\tr{\rho_k \ket{n}\bra{n} }=\bra{n}\rho_k\ket{n}$ is a martingale
    \item $\rho_k$ converges with probability $1$ to one of the $\nmax+1$  Fock state $\ket{n}\bra{n}$ with $n\in\{0,\cdots,\nmax\}$.
    \item the probability to converge  towards the Fock state $\ket{n}\bra{n}$ is given by $\tr{\rho_0 \ket{n}\bra{n}}=\bra{ n}\rho_0\ket{ n}$.
\end{itemize}
\end{thm}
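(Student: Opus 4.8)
The plan is to treat the three claims in sequence, the first and third being short consequences of a martingale computation and the second requiring a strictly contracting Lyapunov function. Throughout I write $p_n(\rho)=\bket{n|\rho|n}$ and exploit that, since $\nnn$ is diagonal, $M_g$ and $M_e$ are real diagonal in the Fock basis with $M_g\ket{n}=\cos\varphi_n\ket{n}$ and $M_e\ket{n}=\sin\varphi_n\ket{n}$; in particular $M_g^\dag=M_g$ and $M_e^\dag=M_e$, and invertibility makes $P_{g,k},P_{e,k}>0$ so the conditional expectations are always well defined.

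For the martingale claim I would compute the conditional expectation directly. Conditioned on $\rho_k$, the next state equals $\MM_g(\rho_k)=\tfrac{M_g\rho_k M_g}{P_{g,k}}$ with probability $P_{g,k}$ and $\MM_e(\rho_k)=\tfrac{M_e\rho_k M_e}{P_{e,k}}$ with probability $P_{e,k}$. The normalizations cancel against the detection probabilities, and using $M_g\ket{n}=\cos\varphi_n\ket{n}$ and $M_e\ket{n}=\sin\varphi_n\ket{n}$ one obtains
\[\EE{p_n(\rho_{k+1})\mid\rho_k}=\bket{n|M_g\rho_k M_g|n}+\bket{n|M_e\rho_k M_e|n}=(\cos^2\varphi_n+\sin^2\varphi_n)\,p_n(\rho_k)=p_n(\rho_k).\]
Thus each $p_n(\rho_k)$ is a martingale; being bounded in $[0,1]$, by the martingale convergence theorem it converges almost surely to a limit $p_n^\infty$.

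The heart of the argument is the second claim. I would introduce the nonnegative function $V(\rho)=\sum_{n<m}\sqrt{p_n(\rho)\,p_m(\rho)}$ and repeat the computation for a single product; the normalizations again cancel, yielding
\[\EE{\sqrt{p_n(\rho_{k+1})p_m(\rho_{k+1})}\mid\rho_k}=\big(|\cos\varphi_n\cos\varphi_m|+|\sin\varphi_n\sin\varphi_m|\big)\sqrt{p_n(\rho_k)p_m(\rho_k)}.\]
By Cauchy--Schwarz applied to the unit vectors $(|\cos\varphi_n|,|\sin\varphi_n|)$ and $(|\cos\varphi_m|,|\sin\varphi_m|)$ the bracket is at most $1$, with equality only if $\cos^2\varphi_n=\cos^2\varphi_m$; the non-degeneracy of the spectrum of $M_g^2$ excludes this for $n\neq m$, so each coefficient $\eta_{nm}<1$. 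Setting $\eta=\max_{n<m}\eta_{nm}<1$ gives $\EE{V(\rho_{k+1})\mid\rho_k}\le\eta\,V(\rho_k)$, hence $\EE{V(\rho_k)}\le\eta^k V(\rho_0)\to 0$. Since $V\ge 0$ is a supermartingale it converges almost surely, and Fatou forces the limit to vanish, so $V(\rho_k)\to 0$ a.s.

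Finally I would assemble the pieces. Almost surely $p_n(\rho_k)\to p_n^\infty$ for every $n$ and $\sum_{n<m}\sqrt{p_n^\infty p_m^\infty}=0$, so $p_n^\infty p_m^\infty=0$ whenever $n\neq m$; as $\sum_n p_n(\rho_k)=\tr{\rho_k}=1$ is preserved in the limit, exactly one $p_{n_0}^\infty$ equals $1$ and the rest vanish. Positivity of $\rho_k$ upgrades this diagonal statement to the full matrix: the $2\times2$ principal minors give $|\bket{n|\rho_k|m}|^2\le p_n(\rho_k)\,p_m(\rho_k)$, so every off-diagonal entry tends to $0$ and $\rho_k\to\ket{n_0}\bra{n_0}$ a.s. For the third claim, $p_n^\infty\in\{0,1\}$ is the indicator of $\{\rho_k\to\ket{n}\bra{n}\}$, and since the bounded martingale $p_n(\rho_k)$ is uniformly integrable, $\PP{\rho_k\to\ket{n}\bra{n}}=\EE{p_n^\infty}=p_n(\rho_0)=\bket{n|\rho_0|n}$. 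The main obstacle is the second claim, specifically guessing the contracting function $V$ and extracting \emph{strict} contraction from the non-degeneracy hypothesis; the diagonal-to-full-matrix upgrade and the probability computation are then routine.
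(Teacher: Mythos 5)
Your proof is correct, and it takes a genuinely different route from the paper's on the crucial second claim. The paper works with the single-coordinate functions $V_n(\rho)=f(\bket{n|\rho|n})$, $f(x)=\tfrac{x+x^2}{2}$, shows via a convexity identity that each is a sub-martingale whose conditional increment is an explicit nonnegative quadratic, and then invokes Kushner's invariance theorem: the limit set is contained in $\{\tr{M_g\rho M_g^\dag}=\cos^2\!\varphi_n\}\cup\{\bket{n|\rho|n}=0\}$, a Cauchy--Schwarz comparison of $\tr{M_g^4\rho}$ with $\trr{M_g^2\rho}$ pins the invariant part down to $\ket{n}\bra{n}$, and the resulting convergence in probability of $\bket{n|\rho_k|n}(1-\bket{n|\rho_k|n})$ is upgraded to almost-sure convergence through Doob's theorem and dominated convergence. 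You instead exhibit the global function $V(\rho)=\sum_{n<m}\sqrt{p_n(\rho)p_m(\rho)}$ as a strictly contracting supermartingale, $\EE{V(\rho_{k+1})\,|\,\rho_k}\le\eta\,V(\rho_k)$ with $\eta=\max_{n\neq m}\left(|\cos\varphi_n\cos\varphi_m|+|\sin\varphi_n\sin\varphi_m|\right)<1$, where strictness comes from the equality case of Cauchy--Schwarz on the unit vectors $(|\cos\varphi_n|,|\sin\varphi_n|)$ together with the non-degeneracy of the spectrum of $M_g^2$ --- the very same hypothesis the paper consumes in its invariance analysis. Your route is more elementary (no invariance principle, only Doob's convergence theorem and Fatou) and more quantitative: $\EE{V(\rho_k)}\le\eta^k V(\rho_0)$ gives an explicit exponential rate, consistent with the negative Lyapunov exponents of proposition~1, and your positivity step $|\bket{n|\rho_k|m}|^2\le p_n(\rho_k)p_m(\rho_k)$ cleanly upgrades the diagonal statement to convergence of the full matrix, something the paper gets implicitly from the invariant-set characterization. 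What the paper's heavier machinery buys is reusability: the convexity identity for $f$ and the Kushner argument are reused essentially verbatim in the closed-loop proof of theorem~2, where the control $\alpha_k\neq 0$ destroys the multiplicative structure your computation relies on (the normalizations no longer cancel under $\DD_{\alpha}$), so your contraction argument would not transfer to the feedback setting. Your treatments of the first and third claims coincide with the paper's, up to your uniform-integrability phrasing where the paper uses the martingale identity directly.
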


\begin{proof}
Let us prove that  $\tr{\rho_k\ket{n}\bra{n}}$ is a martingale. Set $\xi=\ket{n}\bra{n}$.
We have
\begin{multline*}
\EE{\tr{\xi\rho_{k+1}}~|~\rho_k} =
P_{g,k} \tr{\xi \tfrac{M_g\rho_k M_g^\dag}{P_{g,k}}}
+ P_{e,k} \tr{\xi \tfrac{M_e\rho_k M_e^\dag}{P_{e,k}}}
\\
= \tr{\xi M_g\rho_k M_g^\dag} + \tr{\xi M_e\rho_k M_e^\dag}
= \tr{\rho_k (M^\dag_g\xi M_g+M_e^\dag \xi M_e)}
.
\end{multline*}
Since $\xi$ commutes with $M_g$ and $M_e$
 and $M^\dag_g M_g+M^\dag_e M_e=\II$ we have $\EE{\tr{\xi\rho_{k+1}}~|~\rho_k} = \tr{\xi\rho_k}$.

Considering the following function:
$$
V_n(\rho)=f(\bket{n|\rho|n}),
$$
where $f(x)=\tfrac{x+x^2}{2}.$
Notice that $f$ is $1$-convexe, $f^\prime \geq  \tfrac{1}{2}$ on $[0,1]$ and satisfies
\begin{equation}\label{eq:convexe}
\forall (x,y,\theta)\in[0,1],\quad  \theta f(x) + (1-\theta)f(y) = \tfrac{\theta(1-\theta)}{2}(x-y)^2 + f(\theta x+ (1-\theta) y)
.
\end{equation}
The function $f$ is  increasing and convex and $\bket{n|\rho_k|n}$ is a martingale. Thus $V_n(\rho_k)$ is sub-martingale.
Since
$$
\bket{n|\MM_g(\rho)|n} =
\tfrac{\cos^2\!\varphi_n}{\tr{M_g\rho M_g^\dag}}\bket{n|\rho|n},
\quad \bket{n|\MM_e(\rho)|n} =
\tfrac{\sin^2\!\varphi_n}{\tr{M_e\rho M_e^\dag}}\bket{n|\rho|n} $$
we have
\begin{multline*}
\EE{V_n(\rho_{k+1})~|~\rho_k}=
\tr{M_g\rho_k M_g^\dag}f\left(\tfrac{\cos^2\!\varphi_n}{\tr{M_g\rho_k M_g^\dag}}\bket{n|\rho_k|n}\right)
\\+
\tr{M_e\rho_k M_e^\dag}f\left(\tfrac{\sin^2\!\varphi_n}{\tr{M_e\rho_k M_e^\dag}}\bket{n|\rho_k|n}\right)
\end{multline*}
Then~\eqref{eq:convexe}, together with
$$\theta =\tr{M_g\rho_k M_g^\dag},~x=\tfrac{\cos^2\!\varphi_n}{\tr{M_g\rho_k M_g^\dag}}\bket{n|\rho_k|n}, ~
y=\tfrac{\sin^2\!\varphi_n}{\tr{M_e\rho_k M_e^\dag}}\bket{n|\rho_k|n}
$$
yields to
\begin{multline*}
\EE{V_n(\rho_{k+1})~|~\rho_k}-V_n(\rho_k) = \\
\tfrac{\tr{M_g\rho_k M_g^\dag}\tr{M_e\rho_k M_e^\dag}(\bket{n|\rho_k|n})^2}{2}\left(\tfrac{\cos^2\!\varphi_n}{\tr{M_g\rho_k M_g^\dag}}-\tfrac{\sin^2\!\varphi_n}{\tr{M_e\rho_k M_e^\dag}}\right)^2
.
\end{multline*}
Thus we recover that $V_n(\rho_k)$ is a sub-martingale,  $\EE{V_n(\rho_{k+1})~|~\rho_k} \geq V_n(\rho_k) $.  We  have also shown that
$ \EE{V_n(\rho_{k+1})~|~\rho_k}=V_n(\rho_k)$ implies that either $\bket{n|\rho_k|n}=0$ or
  $\tr{M_g\rho_k M_g^\dag}=\cos^2\!\varphi_n$ (assumption $M_g$ and $M_e$ invertible is used here).

We apply now the invariance theorem established by Kushner~\cite{kushner-71} (recalled in the Appendix~\ref{append:stoch-stab}) for the Markov process $\rho_k$ and the sub-martingale $V_n(\rho_k)$. This theorem implies that the Markov process $\rho_k$ converges in probability to the largest invariant subset of
$$
\left\{\rho\in \XX~|~\tr{M_g\rho M_g^{\dag}}=\cos^2\!\varphi_n \text{ or } \bket{n|\rho|n}=0\right\}.
 $$
 But the set  $\left\{\rho\in \XX~|~\bket{n|\rho|n}=0\right\}$ is invariant. It remains thus  to characterized the largest invariant subset denoted by $\XX_n$  and included in $\left\{\rho\in \XX~|~\tr{M_g\rho M_g^{\dag}}=\cos^2\!\varphi_n \right\}$.

Take $\rho\in \XX_n$. Invariance means that $\MM_g(\rho)$ and $\MM_e(\rho)$  belong to $\XX_n$ (the fact that $M_g$ and $M_e$ are  invertible   ensures that probabilities  to jump with $s=g$ or $s=e$ are strictly positive for any $\rho\in \XX$).
 Consequently  $\tr{M_g\MM_g(\rho)M_g^\dag}=\tr{M_g\rho M_g^{\dag}}=\cos^2\!\varphi_n $.  This means that
 $\tr{M_g^4\rho}=\trr{M_g^2\rho}$. By Cauchy-Schwartz inequality,
 $$
 \tr{M_g^4\rho}=\tr{M_g^4\rho}\tr{\rho}\geq \trr{M_g^2\rho}
 $$
with equality if, and only if, $M_g^4\rho$ and $\rho$ are co-linear.  $M_g^4$ being non-degenerate, $\rho$ is necessarily a projector over an eigenstate of $M_g^4$, i.e., $\rho=\ket{m}\bra{m}$ for some $m\in\{0,\ldots,\nmax\}$. Since $ \tr{M_g\rho M_g^{\dag}}=\cos^2\!\varphi_n >0$, $m=n$ and thus $\XX_n$ is reduced to $\{\ket{n}\bra{n}\}$. Therefore the only possibilities for the $\omega$-limit set are $\tr{\rho\ket{n}\bra{n}}=0$ or $1$ and
 $$
 W_n(\rho_k)=\tr{\rho_k\ket{n}\bra{n}}(1-\tr{\rho_k\ket{n}\bra{n}}\stackrel{k\rightarrow \infty}{\longrightarrow} 0\qquad \text{in probability.}
 $$
 The convergence in probability together with the fact that $W_n(\rho_k)$ is a positive bounded ($W_n\in[0,1]$) random process implies the convergence in expectation. Indeed
 \begin{align*}
 \limsup_{k\rightarrow\infty} \EE{W_n(\rho_k)} &\leq \epsilon \limsup_{k\rightarrow\infty}\PP{W_n(\rho_k)\leq\epsilon}+\limsup_{k\rightarrow\infty}\PP{W_n(\rho_k)>\epsilon}\\
 &\leq \epsilon+\limsup_{k\rightarrow\infty}\PP{W_n(\rho_k)>\epsilon}\leq \epsilon,
 \end{align*}
 where for the last inequality, we have applied the convergence in probability of $W_n(\rho_k)$ towards $0$. As the above inequality is valid for any $\epsilon>0$, we have
 $$
 \lim_{k\rightarrow\infty} \EE{W_n(\rho_k)}=0.
 $$
 Furthermore, by the first part of the Theorem, we know that $\tr{\rho_k\ket{n}\bra{n}}$ is a bounded martingale and therefore by the Doob's first martingale convergence theorem (see the Theorem~\ref{thm:conv_martingale1} of the Appendix~\ref{append:stoch-stab}), $\tr{\rho_k\ket{n}\bra{n}}$ converges almost surely towards a random variable $l_n^\infty\in[0,1]$. This implies that $W_n(\rho_k)$ converges almost surely towards the random variable $l_n^\infty(1-l_n^\infty)\in[0,1]$. We apply now the dominated convergence theorem
 $$
 \EE{l_n^\infty(1-l_n^\infty)}=\EE{ \lim_{k\rightarrow\infty} W_n(\rho_k)}=\lim_{k\rightarrow\infty} \EE{W_n(\rho_k)}=0.
 $$
 This implies that $l_n^\infty(1-l_n^\infty)$ vanishes  almost surely and therefore
 $$
 W_n(\rho_k)=\tr{\rho_k\ket{n}\bra{n}}(1-\tr{\rho_k\ket{n}\bra{n}})\stackrel{k\rightarrow \infty}{\longrightarrow} 0 \qquad \text{almost surely.}
 $$
As we can repeat this same analysis for any choice of $n\in\{0,1,\ldots,\nmax\}$, $\rho_k$ converges almost surely to the set of of Fock states
$$
\{\ket{n}\bra{n}~|~n=0,1,\ldots,\nmax\},
$$
which ends the proof of the second part.

We have shown that the probability measure associated to the random variable $\rho_k$ converges to the probability measure
$$
\sum_{n=0}^{\nmax} p_n\delta(\ket{n}\bra{n}),
$$
where $ \delta(\ket{n}\bra{n})$ denotes the Dirac distribution at $\ket{n}\bra{n}$ and $p_n$ is the probability of convergence towards $\ket{n}\bra{n}$. In particular, we have
$$
\EE{\tr{\ket{n}\bra{n}\rho_k}}\stackrel{k\rightarrow \infty}{\longrightarrow} p_n.
$$
But $\tr{\ket{n}\bra{n}\rho_k}$ is a martingale and $\EE{\tr{\ket{n}\bra{n}\rho_k}}=\EE{\tr{\ket{n}\bra{n}\rho_0}}$. Thus
$$
p_n=\bra{n}\rho_0\ket{n},
$$
which ends the proof of the third and last part.


\end{proof}

\subsection{Local convergence rate}\label{ssec:open-rate}
According to theorem~\ref{thm:OpenLoop}, the $\Omega$-limit set of the Markov process~\eqref{eq:OpenLoop} is the discrete set of Fock states $\{ \ket{n}\bra{n}\}_{n\in\{0,\ldots,\nmax\}}$. We investigate here the local convergence rate around one of these Fock states  denoted by $\bar\rho = \ket{\bar n}\bra{\bar n}$ for some $\bar n \in\{0,\ldots,\nmax\}$.

Since $\MM_g(\bar\rho)=\MM_e(\bar\rho)=\bar\rho$, we can  develop the dynamics~\eqref{eq:OpenLoop} around the fixed point $\bar\rho$. We write $\rho=\bar\rho+\delta\rho$ with $\delta\rho$ small, Hermitian and with zero trace. Keeping only the  first order terms in~\eqref{eq:OpenLoop}, we have
\begin{equation*}
\delta\rho_{k+1}=\tfrac{M_{s_k}\delta\rho_k M_{s_k}^\dag}{\tr{M_{s_k}\bar\rho M_{s_k}^\dag}}-\tfrac{\tr{M_{s_k}\delta\rho_k M_{s_k}^\dag}}{\tr{M_{s_k}\bar\rho M_{s_k}^\dag}}\bar\rho.
\end{equation*}
Thus the linearized  Markov process  around the fixed point $\bar \rho$   reads
\begin{equation}\label{eq:OpenLoopLin}
\delta\rho_{k+1}=A_{s_k}\delta\rho_k A_{s_k}^\dag-\tr{A_{s_k}\delta\rho_k A_{s_k}^\dag}\bar\rho
\end{equation}
where the random matrices $A_{s_k}$ are given by :
\begin{itemize}
\item $A_g=\frac{M_g}{\cos\varphi_{\bar n}}$ with probability $P_g=\cos^2\!\varphi_{\bar n}$
\item $A_e=\frac{M_e}{\sin\varphi_{\bar n}}$ with probability $P_e=\sin^2\!\varphi_{\bar n}$.
\end{itemize}
The following proposition  shows that the convergence of the linearized dynamics is exponential (a crucial  robustness indication).
\begin{prop}\label{prop:OpenLoopLin}
Consider the linear Markov chain~\eqref{eq:OpenLoopLin} of state $\delta\rho$ belonging to the set of Hermitian matrices with zero trace.  Then the  largest  Lyapunov exponent $\Lambda$ is given by ($\varphi_n=\varphi_0+n {\vartheta}$)
$$
\Lambda = \max_{ \text{\scriptsize $\begin{array}{c}
                   n\in\{0,\ldots,\nmax\} \\
                   n\neq \bar n
                 \end{array}$}
}
 \left(\cos^2\!\varphi_{\bar n}\log\left(\tfrac{|\cos\varphi_{n}|}{|\cos\varphi_{\bar n}|}\right)
    +\sin^2\!\varphi_{\bar n}\log\left(\tfrac{|\sin\varphi_n|}{|\sin\varphi_{\bar n}|}\right) \right)
$$
and is  strictly negative: $\Lambda < 0$.
\end{prop}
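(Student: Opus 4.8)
The plan is to exploit the fact that $M_g$ and $M_e$, hence $A_g$ and $A_e$, are \emph{diagonal} in the Fock basis, so that the linear map in~\eqref{eq:OpenLoopLin} acts almost entry-wise on $\delta\rho$. Writing $\delta\rho_{mn}=\bket{m|\delta\rho|n}$ and using $(A_g)_{nn}=\cos\varphi_n/\cos\varphi_{\bar n}$, $(A_e)_{nn}=\sin\varphi_n/\sin\varphi_{\bar n}$, the map $\delta\rho\mapsto A_{s}\delta\rho A_{s}^\dag$ multiplies $\delta\rho_{mn}$ by the scalar $\lambda_{mn}(s)=(A_s)_{mm}(A_s)_{nn}$. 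Since $\bar\rho=\ket{\bar n}\bra{\bar n}$, the correction term $-\tr{A_{s}\delta\rho A_{s}^\dag}\bar\rho$ touches only the $(\bar n,\bar n)$ entry; combined with the constraint $\tr{\delta\rho}=0$, that entry is slaved to the others and can be discarded (note also $(A_g)_{\bar n\bar n}=(A_e)_{\bar n\bar n}=1$, consistent with $\bar\rho$ being fixed). Thus the free coordinates $\{\delta\rho_{mn}:(m,n)\neq(\bar n,\bar n)\}$ obey \emph{decoupled} scalar recursions $\delta\rho_{mn}^{(k+1)}=\lambda_{mn}(s_k)\,\delta\rho_{mn}^{(k)}$, the conjugate pair $(m,n),(n,m)$ carrying the real factor $\lambda_{mn}=\lambda_{nm}$ consistently with hermiticity.

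The crucial simplification is that, at the fixed point, the jump probabilities $P_g=\cos^2\!\varphi_{\bar n}$ and $P_e=\sin^2\!\varphi_{\bar n}$ are \emph{constant}, so $(s_k)_k$ is an i.i.d.\ sequence. Hence $\tfrac1k\log|\delta\rho_{mn}^{(k)}|=\tfrac1k\sum_{j<k}\log|\lambda_{mn}(s_j)|+o(1)$ converges almost surely, by the strong law of large numbers, to $\EE{\log|\lambda_{mn}|}=c_m+c_n$, where
$$
c_n:=\cos^2\!\varphi_{\bar n}\log\tfrac{|\cos\varphi_n|}{|\cos\varphi_{\bar n}|}+\sin^2\!\varphi_{\bar n}\log\tfrac{|\sin\varphi_n|}{|\sin\varphi_{\bar n}|},\qquad c_{\bar n}=0.
$$
Because the dynamics is diagonal, the top Lyapunov exponent is the largest of these entry-wise rates achieved by a generic (all-entries-nonzero) initial $\delta\rho_0$, i.e.\ $\Lambda=\max_{(m,n)\neq(\bar n,\bar n)}(c_m+c_n)$.

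It then remains to identify this maximum and its sign. I would show $c_n<0$ for every $n\neq\bar n$ by recognizing $2c_n$ as minus the relative entropy between the distributions $(\cos^2\!\varphi_{\bar n},\sin^2\!\varphi_{\bar n})$ and $(\cos^2\!\varphi_n,\sin^2\!\varphi_n)$; equivalently, strict concavity of $\log$ (Jensen) gives
$$
2c_n=\cos^2\!\varphi_{\bar n}\log\tfrac{\cos^2\varphi_n}{\cos^2\varphi_{\bar n}}+\sin^2\!\varphi_{\bar n}\log\tfrac{\sin^2\varphi_n}{\sin^2\varphi_{\bar n}}<\log(\cos^2\!\varphi_n+\sin^2\!\varphi_n)=0,
$$
the inequality being strict precisely because the non-degeneracy assumption on the spectrum of $M_g^2$ forces $\cos^2\!\varphi_n\neq\cos^2\!\varphi_{\bar n}$, i.e.\ the two distributions differ. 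With $c_{\bar n}=0$ and all other $c_n<0$, any entry with $m,n\neq\bar n$ satisfies $c_m+c_n<\max_{k\neq\bar n}c_k$, whereas the entries $(\bar n,n)$ and $(m,\bar n)$ realize the value $\max_{k\neq\bar n}c_k$; hence $\Lambda=\max_{n\neq\bar n}c_n$, which is strictly negative. The main obstacle is not any single computation but making rigorous the passage from the entry-wise SLLN limits to the genuine top Lyapunov exponent of the matrix recursion: this is clean here only because the diagonal structure fully decouples the coordinates and the transitions are i.i.d., so one must invoke the precise definition of $\Lambda$ fixed in Appendix~\ref{append:Lyap-exp} and verify that a generic initial $\delta\rho_0$ attains the maximal rate.
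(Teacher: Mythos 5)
Your proof is correct and its skeleton coincides with the paper's: exploit the diagonality of $A_g,A_e$ to decouple~\eqref{eq:OpenLoopLin} into scalar recursions $\delta\rho^{n_1,n_2}_{k+1}=a^{n_1,n_2}_{s_k}\delta\rho^{n_1,n_2}_k$ (you correctly note that the correction term $-\tr{A_s\delta\rho A_s^\dag}\bar\rho$ feeds only the $(\bar n,\bar n)$ entry, which the zero-trace constraint slaves to the other diagonal entries --- the paper discards it with the same remark), apply the strong law of large numbers to the i.i.d.\ jumps (the probabilities being frozen at $\cos^2\!\varphi_{\bar n},\sin^2\!\varphi_{\bar n}$ at the fixed point) to get $\Lambda^{n_1,n_2}=c_{n_1}+c_{n_2}$, and conclude from $c_{\bar n}=0$, $c_n<0$ for $n\neq\bar n$, and $c_m+c_n\leq \max_{n\neq\bar n}c_n$ for $(m,n)\neq(\bar n,\bar n)$. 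Where you genuinely depart from the paper is the proof that $c_n<0$: the paper argues by calculus, showing that $\varphi\mapsto\left(\tfrac{\cos\varphi}{|\cos\varphi_{\bar n}|}\right)^{\cos^2\!\varphi_{\bar n}}\left(\tfrac{\sin\varphi}{|\sin\varphi_{\bar n}|}\right)^{\sin^2\!\varphi_{\bar n}}$ increases strictly to $1$ at $\arcsin(|\sin\varphi_{\bar n}|)$ and then decreases strictly, so its value is $<1$ whenever $|\cos\varphi_n|\neq|\cos\varphi_{\bar n}|$; you instead recognize $-2c_n$ as the Kullback--Leibler divergence between the Bernoulli laws $(\cos^2\!\varphi_{\bar n},\sin^2\!\varphi_{\bar n})$ and $(\cos^2\!\varphi_n,\sin^2\!\varphi_n)$ and invoke strict concavity of $\log$ via Jensen. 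Your route is shorter, avoids the monotonicity discussion, and makes transparent exactly where the non-degeneracy of the spectrum of $M_g^2$ enters (it forces the two distributions to differ, hence strictness); the paper's route yields in addition a quantitative picture of how $c_n$ varies with $\varphi_n$. You are also slightly more careful than the paper in flagging the passage from entry-wise almost-sure rates to the top Lyapunov exponent: since the coordinates decouple, the exponent of a trajectory is the maximum of $c_{n_1}+c_{n_2}$ over the nonzero entries of $\delta\rho_0$, and $\Lambda$ is attained by any initial condition with a nonzero $(n,\bar n)$ entry at the maximizing $n$ --- a point the paper leaves implicit.
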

\begin{proof} Set $\delta\rho^{n_1,n_2}=\bket{n_1|\delta\rho|n_2}$ for any $n_1, n_2\in\{0, \ldots, \nmax\}$. Since $\tr{\delta\rho_k} \equiv 0$, we  exclude here the case  $(n_1,n_2)=(\bar n, \bar n)$ because
$\delta\rho_k^{\bar n, \bar n} = - \sum_{n\neq \bar n} \delta\rho_k^{n, n}$.
Since $A_e$ and $A_g$ are diagonal matrices, we have
\begin{equation}\label{eq:deltarho}
\delta\rho_{k+1}^{n_1,n_2}= a_{s_k}^{n_1,n_2} \delta\rho_k^{n_1,n_2}
\end{equation}
where $s_k=g$ (resp. $s_k=e$) with probability  $\cos^2\varphi_{\bar n}$ (resp. $\sin^2\varphi_{\bar n}$) and where
$a_{g}^{n_1,n_2} =\tfrac{\cos\varphi_{n_1}\cos\varphi_{n_2}}{\cos^2\varphi_{\bar n}}$ and
$a_{e}^{n_1,n_2} =\tfrac{\sin\varphi_{n_1}\sin\varphi_{n_2}}{\sin^2\varphi_{\bar n}}$.

Denote by $\Lambda^{n_1,n_2}$ the Lyapunov exponent of~\eqref{eq:deltarho} for $(n_1,n_2)\neq(\bar n, \bar n)$.
By the law of large numbers,  we know that
$\frac{\log\left(\left|\prod_{l=0}^{l=k}  a_{s_k}^{n_1,n_2}\right|\right)}{k+1}$ converges almost surely towards
$$
\cos^2\!\varphi_{\bar n}~ \log(|a_{g}^{n_1,n_2} |) +
\sin^2\!\varphi_{\bar n} ~\log(|a_{e}^{n_1,n_2} |) .
$$
Thus, we have
\begin{multline*}
\Lambda^{n_1,n_2}=
  \cos^2\varphi_{\bar n}
   \left(\log\left(\tfrac{|\cos\varphi_{n_1}|}{|\cos\varphi_{\bar n}|}\right)+
    \log\left(\tfrac{|\cos\varphi_{n_2}|}{|\cos\varphi_{\bar n}|}\right) \right)
\\
+
\sin^2\varphi_{\bar n}
     \left(\log\left(\tfrac{|\sin\varphi_{n_1}|}{|\sin\varphi_{\bar n}|}\right)+
    \log\left(\tfrac{|\sin\varphi_{n_2}|}{|\sin\varphi_{\bar n}|}\right) \right)
\end{multline*}
The function
\begin{eqnarray*}
    \left[0,\frac{\pi}{2}\right] \ni \varphi \mapsto
    \left(\frac{\cos\varphi}{|\cos\varphi_{\bar n}|}\right)^{\cos^2\varphi_{\bar n}}
    ~
    \left(\frac{\sin\varphi}{|\sin\varphi_{\bar n}|}\right)^{\sin^2\varphi_{\bar n}}
\end{eqnarray*}
increases strictly from $0$ to $1$ when $\varphi$ goes from $0$ to $\arcsin(|\sin\varphi_{\bar n}|)$
and decreases strictly from $1$ to $0$ when $\varphi$ goes from $\arcsin(|\sin\varphi_{\bar n}|)$ to
$\frac{\pi}{2}$. Since $(n_1,n_2)\neq (\bar n, \bar n )$, $\Lambda^{n_1,n_2} <0$. Denote by $\Lambda^n = \Lambda^{n,\bar n}$ for $n\in \{0, \ldots, \nmax\}$:
$$
\Lambda^n = \cos^2\!\varphi_{\bar n}\log\left(\tfrac{|\cos\varphi_{n}|}{|\cos\varphi_{\bar n}|}\right)
    +\sin^2\!\varphi_{\bar n}\log\left(\tfrac{|\sin\varphi_n|}{|\sin\varphi_{\bar n}|}\right)
.
$$
Since $(n_1,n_2)\neq (\bar n, \bar n)$, we have $\Lambda^{n_1,n_2} \leq \max_{n\neq \bar n}  \Lambda^n$ and $ \Lambda = \max_{n\neq \bar n}  \Lambda^n $ is strictly negative.
\end{proof}

\section{Feedback stabilization with delays}\label{sec:closed-loop}
\subsection{Feedback scheme and closed-loop simulations }

\begin{figure}
\centerline{\includegraphics[width=.6\textwidth]{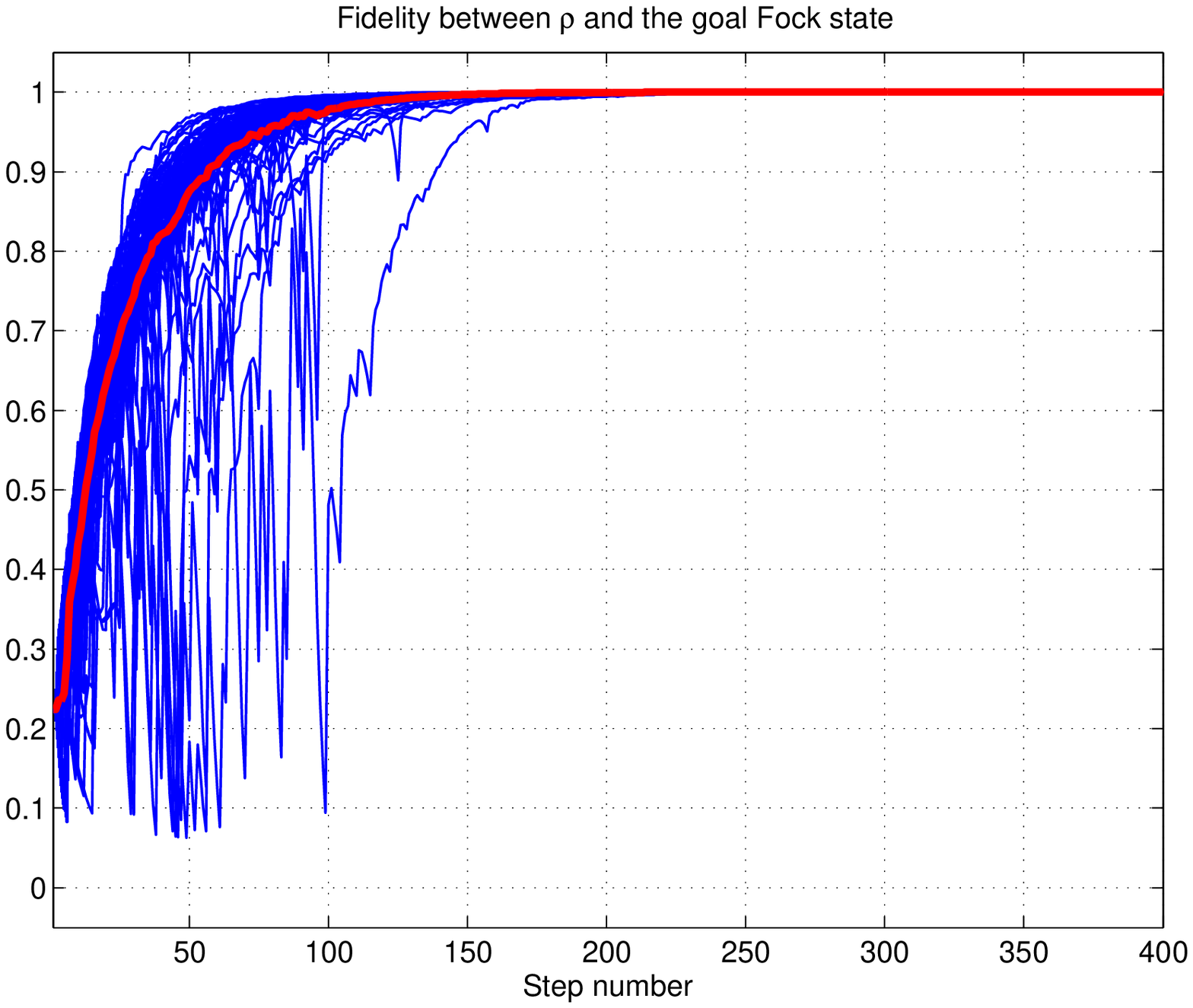}}
  \caption{ $\tr{\rho_k\bar\rho}=\bket{3|\rho_k|3}$  versus $k\in\{0, \ldots, 400\}$ for 100 realizations of  the closed-loop Markov process~\eqref{eq:main} with feedback~\eqref{eq:feedbackCDC} (blue fine curves)  starting from the same  state $\rho_0=\DD_{\sqrt{3}}(\ket{0} \bra{0})$ (no delay, $d=0$). The ensemble average over these realizations corresponds to the thick red curve.}
   \label{fig:ClosedLoop}
\end{figure}

\begin{figure}
\centerline{\includegraphics[width=.6\textwidth]{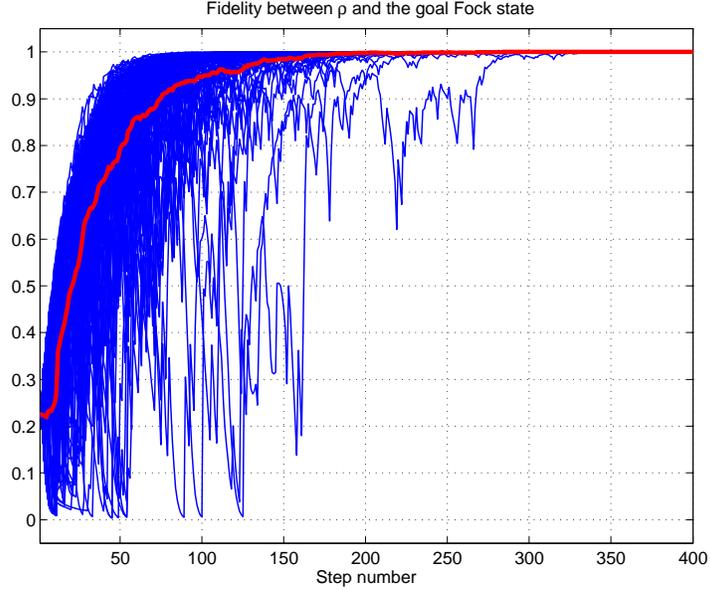}}
  \caption{ $\tr{\rho_k\bar\rho}=\bket{3|\rho_k|3}$  versus $k\in\{0, \ldots, 400\}$ for 100 realizations of  the closed-loop Markov process~\eqref{eq:chi} with feedback~\eqref{eq:feedmain} (blue fine curves)  starting from the same  state $\chi_0=(\DD_{\sqrt{3}}(\ket{0} \bra{0}),0,\ldots,0)$ and with $5$-step delay ($d=5$). The ensemble average over these realizations corresponds to the thick red curve.}
   \label{fig:ClosedLoopDelay}
\end{figure}

Through out this section we assume that we have access at each step $k$ to the cavity  state $\rho_k$. The goal is to design a causal  feedback law  that stabilizes globally the Markov chain~\eqref{eq:main} towards a goal Fock state  $\bar\rho =\ket{\bar n}\bra{\bar n}$  with $\bar n$ photon(s), $\bar n \in \{0,\ldots, \nmax\}$. To be consistent with truncation to $\nmax$ photons, $\bar n$  has to be far from $\nmax$ (typically $\bar n =3$ with $\nmax =10$ in the simulations below).

The feedback is based on the fact that, in open-loop when $\alpha_k\equiv0$, $\tr{\bar\rho\rho_k}=\bket{\bar n | \rho_k | \bar n}$ is a martingale. When $d=0$,~\cite{mirrahimi-et-al:cdc09} proves global almost sure convergence of the following feedback law
\begin{equation}\label{eq:feedbackCDC}
    \alpha_k=\left\{
    \begin{array}{ll}
        \epsilon \tr{\bar\rho~ [\rho_{k},\aaa]}
         & \mbox{ if } \tr{\bar\rho\rho_k} \ge \eta
         \\
     \underset{|\alpha|\le\bar\alpha}{\text{argmax}}
        \left( \tr{\bar\rho~\DD_\alpha(\rho_{k})}\right)
          & \mbox{ if } \tr{\bar\rho\rho_k} < \eta \\
    \end{array}
    \right.
\end{equation}
for any $\bar\alpha >0$  when $\epsilon,\eta >0$  are small enough. This feedback law  ensures that $\tr{\bar\rho\rho_k}$ is a sub-martingale.

When $d>0$, we cannot set
$\alpha_{k-d}= \epsilon \tr{\bar\rho~[\rho_{k},\aaa]}$
since $\alpha_k$ will depend on $\rho_{k+d}$ and the feedback law is not causal. In~\cite{dotsenko-et-al:PRA09}, this feedback law is made causal by replacing $\rho_{k+d}$ by its  expectation value (average prediction) $\rhop_{k}$ knowing $\rho_k$ and the past controls $\alpha_{k-1}$, \ldots, $\alpha_{k_d}$:
$$
  \rhop_{k}= \KK_{\alpha_{k-1}}\circ\ldots \circ \KK_{\alpha_{k-d}}(\rho_k)
$$
where the Kraus map $\KK_\alpha$ is defined by~\eqref{eq:kraus}.

We will thus consider here the following causal feedback based on an average compensation of the delay $d$
\begin{equation}\label{eq:feedmain}
    \alpha_{k}=\left\{
    \begin{array}{ll}
        \epsilon \tr{\bar\rho~ [\rhop_{k},\aaa]}
         & \mbox{ if } \tr{\bar\rho\rhop_{k}} \ge \eta
         \\
     \underset{|\alpha|\le\bar\alpha}{\text{argmax}}
        \left( \tr{\bar\rho~\DD_\alpha(\rhop_{g,k})}\tr{\bar\rho~\DD_\alpha(\rhop_{e,k})}\right)
          & \mbox{ if } \tr{\bar\rho\rhop_{k}} < \eta \\
    \end{array}
    \right.
\end{equation}
with $$
\left\{
\begin{array}{l}
  \rhop_{g,k}=\KK_{\alpha_{k-1}}\circ\ldots \circ \KK_{\alpha_{k-d+1}}(M_g D_{\alpha_{k-d}}\rho_k D_{\alpha_{k-d}}^\dag M_g^\dag)
\\
  \rhop_{g,k}=\KK_{\alpha_{k-1}}\circ\ldots \circ \KK_{\alpha_{k-d+1}}(M_e D_{\alpha_{k-d}}\rho_k D_{\alpha_{k-d}}^\dag M_e^\dag)
\end{array}
\right.
$$
The closed-loop system, i.e.  Markov chain~\eqref{eq:main} with the causal feedback~\eqref{eq:feedmain} is still a Markov chain but with $(\rho_k,\alpha_{k-1},\cdots,\alpha_{k-d})$ as state at step $k$.  More precisely, denote by $\chi=(\rho,\beta_{1},\cdots,\beta_{d})$ this state where $\beta_l$ stands for the control $\alpha$ delayed $l$ steps. Then the state form  of the closed-loop dynamics reads
\begin{equation}\label{eq:chi}
  \left\{\begin{array}{rl}
    \rho_{k+1}&=\MM_{s_k}(\DD_{\beta_{d,k}}(\rho_k))
    \\
    \beta_{1,k+1}&= \alpha_k
    \\
    \beta_{2,k+1} &= \beta_{1,k}
    \\
    & \vdots
    \\
   \beta_{d,k+1} &= \beta_{d-1,k}.
\end{array}\right.
\end{equation}
where the control law defined by~\eqref{eq:feedmain}  corresponds to a static state feedback since
\begin{equation}
\label{eq:prediction}
\left\{
\begin{array}{l}
  \rhop_{k}=\rhop(\chi_k)=\EE{\rho_{k+d}~|~\chi_k} = \KK_{\beta_{1,k}}\circ\ldots \circ \KK_{\beta_{d,k}}(\rho_k)
\\
  \rhop_{g,k}=\rhop_g(\chi_k)=\KK_{\beta_{1,k}}\circ\ldots \circ \KK_{\beta_{d-1,k}}(M_g D_{\beta_{d,k}}\rho_k D_{\beta_{d,k}}^\dag M_g^\dag)
\\
  \rhop_{e,k}=\rhop_e(\chi_k)=\KK_{\beta_{1,k}}\circ\ldots \circ \KK_{\beta_{d-1,k}}(M_e D_{\beta_{d,k}}\rho_k D_{\beta_{d,k}}^\dag M_e^\dag)
\end{array}
\right.
\end{equation}
Notice that $\rhop_k=\rhop_{g,k}+\rhop_{e,k}$.

Simulations displayed on Figures~\ref{fig:ClosedLoop} and~\ref{fig:ClosedLoopDelay}  correspond to  100 realizations of the above
closed-loop systems with $d=0$ and $d=5$. The goal state $\bar\rho =\ket{3}\bra{3}$ contains $\bar n =3$ photons and   $\nmax$, $\varphi_0$ and ${\vartheta}$  are those used for the open-loop simulations of Figure~\ref{fig:OpenLoop}. Each  realization starts with  the same coherent state $\rho_0=\DD_{\sqrt{3}}(\ket{0}\bra{0})$ and $\beta_{1,0}=\ldots=\beta_{d,0}=0$. The feedback  parameters appearing in~\eqref{eq:feedmain} are as follows:
$$
\epsilon = \tfrac{1}{2\bar n + 1}=\tfrac{1}{7}, \quad \eta = \tfrac{1}{10}, \quad \bar\alpha=1
.
$$
This simulations illustrate the influence of the delay $d$ on the average convergence speed: the longer the delay is the slower  convergence speed becomes.
\begin{rem}\label{rem:const-cont}
The choice of the feedback law whenever $\tr{\bar\rho \rhop_k}<\eta$ might seem complicated for real-time simulation issues. However, this choice is only technical. Actually, any non-zero constant feedback law will seems to achieve the task here (see for instance the simulations of~\cite{dotsenko-et-al:PRA09}). However, the convergence proof for such simplified control scheme  is  more complicated and not considered in this paper.
\end{rem}

\subsection{Global convergence in closed-loop}
\begin{thm}\label{thm:ClosedLoop}
Take the Markov chain~\eqref{eq:chi} with the feedback~\eqref{eq:feedmain} where  $\rhop_{k}$, $\rhop_{g,k}$ and $\rhop_{e,k}$ are given by~\eqref{eq:prediction} with   $\bar \alpha >0$.  Then, for  small enough $\epsilon>0$ and $\eta>0$,  the state $\chi_k$ converges almost surely towards  $\bar \chi=(\bar \rho, 0,\ldots, 0)$  whatever the  initial condition $\chi_0 \in \XX\times \CC^d$ is  (the compact set $\XX$ is defined by~\eqref{eq:XX}).
\end{thm}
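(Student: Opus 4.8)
The plan is to build a bounded submartingale out of the predicted overlap with the target, invoke Doob's convergence theorem together with Kushner's invariance theorem, and then check that the only closed-loop invariant set compatible with the level structure of this submartingale is $\{\bar\chi\}$. I would first note that $\rhop(\chi)$, $\rhop_g(\chi)$, $\rhop_e(\chi)$ in~\eqref{eq:prediction} are always (sub-normalized) elements associated with $\XX$, so the feedback~\eqref{eq:feedmain} produces $|\alpha_k|\le\bar\beta:=\max(\bar\alpha,2\epsilon\sqrt{\nmax})$ for every $k$; hence after at most $d$ steps $\chi_k$ lives in the compact set $\bar\XX:=\XX\times\{\beta\in\CC^d:|\beta_l|\le\bar\beta\}$, on which every map is continuous, and I may run the whole argument there. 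The central quantity is $V(\chi):=\tr{\bar\rho\,\rhop(\chi)}$. Because $\bar\rho=\ket{\bar n}\bra{\bar n}$ commutes with $M_g,M_e$ and $M_g^2+M_e^2=\II$, one has $\tr{\bar\rho\,\KK_\alpha(\sigma)}=\tr{\bar\rho\,\DD_\alpha(\sigma)}$ for all $\sigma$; peeling off the outermost Kraus map in $\rhop_{k+1}$, using $\beta_{1,k+1}=\alpha_k$, the linearity of the inner prediction maps and $\EE{\rho_{k+1}\mid\chi_k}=\KK_{\beta_{d,k}}(\rho_k)$, this yields the key identity $\EE{V(\chi_{k+1})\mid\chi_k}=\tr{\bar\rho\,\DD_{\alpha_k}(\rhop_k)}$. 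The delay thus disappears from the analysis: everything reduces to the one-step displacement of the prediction.

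It remains to show $\tr{\bar\rho\,\DD_{\alpha_k}(\rhop_k)}\ge V(\chi_k)$ for both branches of~\eqref{eq:feedmain}. On $\{V\ge\eta\}$, with $\alpha_k=\epsilon c$ and $c=\tr{\bar\rho\,[\rhop_k,\aaa]}$, a second-order expansion of $\DD_{\epsilon c}$ in $\epsilon$ gives $\tr{\bar\rho\,\DD_{\alpha_k}(\rhop_k)}-V(\chi_k)=2\epsilon|c|^2+O(\epsilon^2|c|^2)\ge\epsilon|c|^2\ge0$ for $\epsilon$ small enough uniformly on $\bar\XX$, with equality iff $c=0$. On $\{V<\eta\}$ I would prove a reachability lemma: writing $a(\alpha)=\tr{\bar\rho\,\DD_\alpha(\rhop_g)}$ and $b(\alpha)=\tr{\bar\rho\,\DD_\alpha(\rhop_e)}$ (so $a+b=\tr{\bar\rho\,\DD_\alpha(\rhop)}$ since $\rhop=\rhop_g+\rhop_e$), the map $\chi\mapsto\max_{|\alpha|\le\bar\alpha}a(\alpha)b(\alpha)$ is continuous and strictly positive on the compact set $\{V\le\eta\}$ — positivity because $a(\cdot),b(\cdot)$ are real-analytic and not identically zero (the displaced states $D_\alpha^\dag\ket{\bar n}$ are total, while $\tr{\rhop_g},\tr{\rhop_e}>0$ by invertibility of $M_g,M_e$) — hence bounded below by some $\delta_0>0$. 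Since~\eqref{eq:feedmain} maximizes this product and each factor is $\le1$, at the optimum both $a(\alpha_k),b(\alpha_k)\ge\delta_0$, whence $\tr{\bar\rho\,\DD_{\alpha_k}(\rhop_k)}=a(\alpha_k)+b(\alpha_k)\ge2\delta_0$; choosing $\eta<2\delta_0$ makes the increment at least $2\delta_0-\eta>0$ on this region. Therefore $V(\chi_k)$ is a bounded submartingale and, by Doob's theorem, converges almost surely to some $V_\infty\in[0,1]$.

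I would then apply Kushner's invariance theorem to the submartingale $V(\chi_k)$, exactly as in the proof of Theorem~\ref{thm:OpenLoop}: $\chi_k$ converges in probability to the largest invariant set $\Inv\subseteq\{\chi:\EE{V(\chi_{k+1})\mid\chi_k=\chi}=V(\chi)\}$. The estimates above show this zero-increment set misses $\{V<\eta\}$ completely and, inside $\{V\ge\eta\}$, forces $c=\tr{\bar\rho\,[\rhop,\aaa]}=0$, hence $\alpha=0$. Thus on $\Inv$ the control vanishes identically, the buffer $\beta$ flushes to $0$, the chain runs under the open-loop Kraus dynamics, and there $V=\tr{\bar\rho\,\rho}$ because $\tr{\bar\rho\rho_k}$ is the open-loop martingale. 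Re-running the invariance analysis of Theorem~\ref{thm:OpenLoop} on $\Inv$ (strict sub-martingale increase of $f(\bket{\bar n|\rho|\bar n})$, then Cauchy–Schwarz and non-degeneracy of $M_g^2$) forces $\rho=\ket{m}\bra{m}$; the point that keeps this short is that every spurious Fock state $\ket{m}\bra{m}$, $m\neq\bar n$, has $V=0<\eta$ and is automatically excluded from $\Inv$, leaving only $m=\bar n$, i.e. $\Inv=\{\bar\chi\}$. Finally, matching the in-probability limit with the almost sure limit of $V$ gives $V_\infty=1$ a.s.; since $\bar\rho$ is a rank-one projector, $V=1$ holds only at $\bar\chi$ on $\bar\XX$, and the continuity of $V$ on the compact set turns $V(\chi_k)\to1$ a.s. into $\chi_k\to\bar\chi$ almost surely.

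The main obstacle is the submartingale property on $\{V<\eta\}$, i.e. the reachability lemma and the ensuing constraint $\eta<2\delta_0$ that fixes how small $\eta$ must be: this is precisely where the product form of the feedback~\eqref{eq:feedmain} and the hypothesis $\bar\alpha>0$ are used. Everything else is comparatively routine — the delay is absorbed by the prediction identity of the first paragraph, and the characterization of $\Inv$ piggybacks on Theorem~\ref{thm:OpenLoop} thanks to the happy fact that the unwanted Fock states live in the forbidden region $\{V<\eta\}$.
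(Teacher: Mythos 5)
Your architecture is genuinely different from the paper's, and in one respect leaner. The paper works with the composed function $V(\chi)=f\left(\tr{\bar\rho\rhop}\right)$, $f(x)=\tfrac{x+x^2}{2}$, establishes the submartingale property only on $\{\tr{\bar\rho\rhop}\ge\eta\}$ (lemma~\ref{lem:martingale}), and then needs two excursion lemmas: a \emph{pathwise} kick (lemma~\ref{lem:kick}: from $\{<\eta\}$ one lands in $\{\ge2\eta\}$ for every measurement outcome, since $\rhop_{k+1}=\KK_{\alpha_k}(\rhop_{s_k,k})/P_{s_k,k}$ with $P_{s_k,k}\le 1$) and a Doob-inequality estimate (lemma~\ref{lem:doobs}, escape probability $p>\eta/(1-\eta)$), combined so that almost every trajectory eventually stays in $\{\ge\eta\}$, after which Kushner's theorem is applied conditionally (lemma~\ref{lem:invariance}). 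You instead note that the linear $V=\tr{\bar\rho\rhop}$ has expected drift at least $2\delta_0-\eta>0$ on $\{V<\eta\}$ (your $a\,b\ge\delta_0$, $a,b\le1\Rightarrow a,b\ge\delta_0$, $a+b=\tr{\bar\rho\,\DD_{\alpha_k}(\rhop_k)}$ rests on the same analyticity/compactness core as lemma~\ref{lem:kick}), so $V$ is a \emph{global} bounded submartingale and Kushner applies in one shot, with no excursion analysis. That is correct: your identity $\EE{V(\chi_{k+1})\mid\chi_k}=\tr{\bar\rho\,\DD_{\alpha_k}(\rhop_k)}$ checks out, and your remark that the $\beta$-buffer enters a compact ball after $d$ steps fills a point the paper leaves implicit. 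The price is paid at the invariance step: because your $V$ is linear, its one-step increment vanishes identically on $\{V\ge\eta,\ \alpha=0\}$ (in open loop $\tr{\bar\rho\rho_k}$ is a martingale), whereas the paper's convexity term in~\eqref{eq:martingale} yields the \emph{pointwise} algebraic condition $\tr{M_g\rho M_g^\dag}=\cos^2\!\varphi_{\bar n}$ on the zero-increment set. So "re-running the invariance analysis of Theorem~\ref{thm:OpenLoop}" does not force $\rho=\ket{m}\bra{m}$ pointwise in your setting; you must argue dynamically: an invariant set inside $\{V\ge\eta,\ \alpha\equiv 0\}$ carries (after the buffer flushes) the open-loop dynamics, whose trajectories converge almost surely to Fock states by theorem~\ref{thm:OpenLoop}; every Fock state except $\bar\rho$ has $V=0<\eta$, and then the martingale property of $V$ forces $V\equiv1$ on the set. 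Your sketch contains these ingredients but should be rewritten in this form.

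The one outright false step is the last one: "since $\bar\rho$ is a rank-one projector, $V=1$ holds only at $\bar\chi$ on $\bar\XX$." This fails already for $d=1$: for any $\beta_1\neq 0$ the state $\chi=(\DD_{-\beta_1}(\bar\rho),\beta_1)$ satisfies $\tr{\bar\rho\,\rhop(\chi)}=\tr{\bar\rho\,\KK_{\beta_1}(\DD_{-\beta_1}(\bar\rho))}=\tr{\bar\rho\,\DD_{\beta_1}\circ\DD_{-\beta_1}(\bar\rho)}=1$, using $\tr{\bar\rho\,\KK_\alpha(\sigma)}=\tr{\bar\rho\,\DD_\alpha(\sigma)}$ and $D_{-\beta}=D_\beta^{-1}$; so $\{V=1\}$ contains a continuum of displaced Fock states with nonzero buffer, and $V(\chi_k)\to 1$ a.s.\ does not, by your argument, give $\chi_k\to\bar\chi$ a.s. The gap is repairable without new ideas: once $V(\chi_k)\to1$ a.s., eventually the first branch of~\eqref{eq:feedmain} is active, $\rhop_k\to\bar\rho$ (trace one, $\bar\rho$ rank one), and since $\tr{\bar\rho\,[\bar\rho,\aaa]}=0$ continuity gives $\alpha_k\to0$, hence $\beta_{j,k}\to0$; then $\tr{\bar\rho\rho_k}=\tr{\bar\rho\,\KK_0^{\circ d}(\rho_k)}$ differs from $V(\chi_k)$ by a term vanishing with $\beta_k$, so $\tr{\bar\rho\rho_k}\to1$ and $\chi_k\to\bar\chi$ almost surely. (In fairness, the paper's lemma~\ref{lem:invariance} also passes from $\text{fid}_\infty\equiv1$ to convergence of $\chi_k$ without spelling this out, but it never asserts the false set identity you rely on.)
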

\begin{proof}
It  is based on the Lyapunov-type  function
\begin{equation}\label{eq:sub-martingale}
    V(\chi)= f\left(\tr{\bar\rho \rho^{\text{\tiny pred}}}\right) \quad\text{with}\quad \rho^{\text{\tiny pred}}=\KK_{\beta_1}\circ \ldots \circ \KK_{\beta_d} (\rho)
\end{equation}
where $f(x)=\frac{x+x^2}{2}$ has already been used during the   proof of theorem~\ref{thm:OpenLoop}.  The proof  relies in 4 lemmas:
\begin{itemize}
\item in lemma~\ref{lem:martingale},  we prove an inequality showing that,  for small enough $\epsilon$, $V(\chi)$ and $\tr{\bar\rho \rhop(\chi)}$   are sub-martingales within $\{\chi~|~ \tr{\bar\rho \rho^{\text{\tiny pred}}} \geq  \eta\}$.
\item in lemma~\ref{lem:kick}, we show that for small enough $\eta$, the trajectories starting within the set $\{\chi~|~ \tr{\bar\rho \rho^{\text{\tiny pred}}} < \eta\}$ always reach  in one step the set $\{\chi~|~ \tr{\bar\rho \rho^{\text{\tiny pred}}}\geq 2\eta\}$;
 \item in lemma~\ref{lem:doobs}, we show that  the trajectories starting within the set $\{\chi~|~ \tr{\bar\rho \rho^{\text{\tiny pred}}}\geq 2\eta\}$, will never hit the set $\{\chi~|~ \tr{\bar\rho \rho^{\text{\tiny pred}}} < \eta\}$ with a uniformly non-zero probability $p>0$;
\item in lemma~\ref{lem:invariance}, we combine  the first step and the invariance principle due to Kushner,  to prove that  almost all trajectories remaining inside $\{\chi~|~ \tr{\bar\rho \rho^{\text{\tiny pred}}} \geq  \eta\}$ converge towards $\bar \chi =(\bar\rho, 0, \ldots, 0)$.
\end{itemize}
The combination of  lemmas~\ref{lem:kick},~\ref{lem:doobs} and~\ref{lem:invariance}  shows then directly  that $\chi_k$ converges almost surely towards $\bar \chi$. We detail now these 4  lemmas.
\end{proof}
\begin{lem} \label{lem:martingale}
For $\epsilon>0$ small enough and  for  $\chi_k$ satisfying $\tr{\bar\rho \rhop(\chi_k)}\geq \eta$,
$$
  \EE{\tr{\bar\rho \rhop(\chi_{k+1})}~|~\chi_k}\geq \tr{\bar\rho \rhop(\chi_{k})}
    +\epsilon \left| \tr{\bar\rho~[\rhop_{k},\aaa]}\right|^2
$$
and also
 \begin{multline}\label{eq:martingale}
 \EE{V(\chi_{k+1})~|~\chi_k}\geq V(\chi_k)+\tfrac{\epsilon}{2} \left|\tr{\bar\rho~[\rhop_{k},\aaa]}\right|^2
\\
   +\tfrac{P_{g,k}P_{e,k}}{2}\text{\rm\huge (}
              \tr{\bar\rho~ \DD_{\alpha_k}\circ\KK_{\beta_{1,k}}\circ\cdots\circ\KK_{\beta_{d-1,k}}
              \circ\MM_g\circ\DD_{\beta_{d,k}}(\rho_k)}\\
           \qquad\qquad -
              \tr{\bar\rho~ \DD_{\alpha_k}\circ\KK_{\beta_{1,k}}\circ\cdots\circ\KK_{\beta_{d-1,k}}
              \circ\MM_e\circ\DD_{\beta_{d,k}}(\rho_k)}\text{\rm\huge )}^2
 \end{multline}
\end{lem}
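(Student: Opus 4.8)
The plan is to reduce the conditional expectation of both quantities to a single deterministic function of the predicted state $\rhop_k=\rhop(\chi_k)$ and then to second-order-expand it in the feedback amplitude $\alpha_k=\epsilon\tr{\bar\rho~[\rhop_k,\aaa]}$, which is legitimate precisely in the branch $\tr{\bar\rho\rhop_k}\geq\eta$.

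First I would compute the conditional law of $\tr{\bar\rho\rhop(\chi_{k+1})}$. From the state dynamics~\eqref{eq:chi} the controls shift by one step and $\rho_{k+1}=\MM_{s_k}(\DD_{\beta_{d,k}}(\rho_k))$, so by linearity of the Kraus maps $\KK$ the quantity $\tr{\bar\rho\rhop(\chi_{k+1})}$ takes exactly the two values $X_g$ and $X_e$ inside the big parentheses of~\eqref{eq:martingale}, with probabilities $P_{g,k}$ and $P_{e,k}$. The crucial algebraic simplification is that, since $\bar\rho=\ket{\bar n}\bra{\bar n}$ is a Fock projector and $M_g,M_e$ are diagonal, $M_g^\dag\bar\rho M_g+M_e^\dag\bar\rho M_e=(\cos^2\varphi_{\bar n}+\sin^2\varphi_{\bar n})\bar\rho=\bar\rho$; hence $\tr{\bar\rho\KK_\alpha(\sigma)}=\tr{\bar\rho\DD_\alpha(\sigma)}$ for every $\sigma$. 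Writing $\sigma=\DD_{\beta_{d,k}}(\rho_k)$ and using $P_{g,k}\MM_g(\sigma)+P_{e,k}\MM_e(\sigma)=\KK_{\beta_{d,k}}(\rho_k)$ (the normalizations cancel) then gives the clean identity $P_{g,k}X_g+P_{e,k}X_e=\tr{\bar\rho\DD_{\alpha_k}(\rhop_k)}$, i.e. $\EE{\tr{\bar\rho\rhop(\chi_{k+1})}~|~\chi_k}=\tr{\bar\rho\DD_{\alpha_k}(\rhop_k)}$.

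The first inequality then follows from expanding $\alpha\mapsto\tr{\bar\rho\DD_\alpha(\rhop_k)}$ at $\alpha=0$. With $D_\alpha=e^A$, $A=\alpha\aaa^\dag-\alpha^*\aaa$, and $z:=\tr{\bar\rho~[\rhop_k,\aaa]}$, the term linear in $A$ of $e^A\rhop_k e^{-A}=\rhop_k+[A,\rhop_k]+\tfrac12[A,[A,\rhop_k]]+\cdots$ contributes $\tr{\bar\rho~[A,\rhop_k]}=\alpha\bar z+\alpha^* z$, which at $\alpha_k=\epsilon z$ equals $2\epsilon|z|^2\geq0$. Since everything lives in the fixed finite dimension $\CC^{\nmax+1}$, the operator $A=\epsilon(z\aaa^\dag-\bar z\aaa)$ has norm $O(\epsilon|z|)$ and the remainder of the above series beyond the linear term is bounded by $C\epsilon^2|z|^2$ with $C$ depending only on $\nmax$. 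Hence $\tr{\bar\rho\DD_{\alpha_k}(\rhop_k)}-\tr{\bar\rho\rhop_k}\geq(2-C\epsilon)\epsilon|z|^2\geq\epsilon|z|^2$ once $\epsilon\leq1/C$, which is the first claim. For the second claim I would use $\EE{V(\chi_{k+1})~|~\chi_k}=P_{g,k}f(X_g)+P_{e,k}f(X_e)$ together with the convexity identity~\eqref{eq:convexe} applied with $\theta=P_{g,k}$, $x=X_g$, $y=X_e$ (all in $[0,1]$, since the arguments are genuine density matrices); this produces the curvature term $\tfrac{P_{g,k}P_{e,k}}{2}(X_g-X_e)^2$ plus $f(\tr{\bar\rho\DD_{\alpha_k}(\rhop_k)})$. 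To turn the latter into $V(\chi_k)=f(\tr{\bar\rho\rhop_k})$ I invoke that $f$ is convex with $f'\geq\tfrac12$ on $[0,1]$, so $f(b)-f(a)\geq\tfrac12(b-a)$ whenever $b\geq a$; taking $a=\tr{\bar\rho\rhop_k}$, $b=\tr{\bar\rho\DD_{\alpha_k}(\rhop_k)}$ and using the first claim ($b-a\geq\epsilon|z|^2\geq0$) yields $f(b)\geq V(\chi_k)+\tfrac{\epsilon}{2}|z|^2$, which combined with the curvature term is exactly~\eqref{eq:martingale}.

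The only genuinely delicate point is the \emph{uniform} smallness threshold on $\epsilon$: I must check that the higher-order remainder is controlled by $|z|^2$, and not merely by $\epsilon^2$ with an independent constant, since otherwise the positive leading term $2\epsilon|z|^2$ could be overwhelmed when $|z|$ is small. This is precisely what the finite-dimensional estimate $\|A\|=O(\epsilon|z|)$ (together with $|z|\leq2\sqrt{\nmax}$) guarantees, giving a remainder $O(\epsilon^2|z|^2)$ and a threshold $\epsilon\leq1/C$ that is independent of $\chi_k$.
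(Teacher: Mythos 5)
Your proof is correct and follows essentially the same route as the paper's: the same reduction $\tr{\bar\rho~\KK_\alpha(\sigma)}=\tr{\bar\rho~\DD_\alpha(\sigma)}$ via $[\bar\rho,M_g]=[\bar\rho,M_e]=0$, the same convexity identity~\eqref{eq:convexe} with $\theta=P_{g,k}$, the same first-order expansion of $\DD_{\alpha_k}$ at $\alpha_k=\epsilon\tr{\bar\rho~[\rhop_k,\aaa]}$, and the same final use of $f^\prime\geq\tfrac{1}{2}$. The one place you go beyond the paper is in justifying the \emph{uniform} threshold on $\epsilon$: the paper writes the remainder as $O(\epsilon^2)$ and simply asserts the inequality holds ``uniformly in $\rhop_k\in\XX$'', whereas your refined bound $C\epsilon^2|z|^2$ (using $\|A\|=O(\epsilon|z|)$ in the fixed finite dimension) is precisely what is needed to make that assertion rigorous when $|z|$ is small.
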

\begin{proof}
Since
$M_g^\dag M_g + M_e^\dag M_e=\II$ and $[\bar\rho, M_g]=[\bar\rho,M_e]=0$, we have
\begin{multline*}
\tr{\bar\rho ~ \KK_{\beta_{1,k+1}}\circ\KK_{\beta_{2,k+1}}\circ\cdots\circ\KK_{\beta_{d,k+1}}(\rho_{k+1})}
=
\\
\tr{\bar\rho ~ \DD_{\beta_{1,k+1}}\circ\KK_{\beta_{2,k+1}}\circ\cdots\circ\KK_{\beta_{d,k+1}}(\rho_{k+1})}
.
\end{multline*}
Also, we have:
\begin{multline*}
\EE{f\left(\tr{\bar\rho ~ \KK_{\beta_{1,k+1}}\circ\KK_{\beta_{2,k+1}}\circ\cdots\circ\KK_{\beta_{d,k+1}}(\rho_{k+1})}\right)~|~\chi_k}=\\
P_{g,k}f\left(\tr{\bar\rho~\DD_{\alpha_{k}}\circ\KK_{\beta_{1,k}}\circ\cdots\circ\KK_{\beta_{d-1,k}}\circ\MM_g\circ\DD_{\beta_{d,k}}(\rho_k)}\right)+\\
P_{e,k}f\left(\tr{\bar\rho~ \DD_{\alpha_{k}}\circ\KK_{\beta_{1,k}}\circ\cdots\circ\KK_{\beta_{d-1,k}}\circ\MM_e\circ\DD_{\beta_{d,k}}(\rho_k)}\right).
\end{multline*}
By~\eqref{eq:convexe} we find
{\small
\begin{multline*}
  \EE{V(\chi_{k+1})~|~\chi_k}=
  f\left(\tr{ \bar\rho~\DD_{\alpha_{k}}\circ\KK_{\beta_{1,k}}\circ\cdots\circ\KK_{\beta_{d-1,k}}\circ\KK_{\beta_{d,k}}(\rho_k)}\right)
\\
    +\tfrac{P_{g,k}P_{e,k}}{2}\left(~\tr{\bar\rho~\DD_{\alpha_k}\circ\KK_{\beta_{1,k}}\circ\cdots\circ\KK_{\beta_{d-1,k}}
     \left(\MM_g\circ\DD_{\beta_{d,k}}(\rho_k)-\MM_e\circ\DD_{\beta_{d,k}}(\rho_k)\right)}~\right)^2
\end{multline*}}
Since $\rhop(\chi_k)=\rhop_k=\KK_{\beta_{1,k}}\circ\cdots\circ\KK_{\beta_{d-1,k}}\circ\KK_{\beta_{d,k}}(\rho_k)$
we have
{\small
$$
\tr{ \bar\rho~\DD_{\alpha_{k}}\circ\KK_{\beta_{1,k}}\circ\cdots\circ\KK_{\beta_{d-1,k}}\circ\KK_{\beta_{d,k}}(\rho_k)}=
 \tr{ \bar\rho~\DD_{\alpha_{k}}(\rhop_k)}
$$ }
For $\alpha$ small  the Baker-Campbell-Hausdorff formula yields
\begin{equation*}
\DD_\alpha(\rho)=e^{\alpha\aaa^\dag-\alpha^*\aaa}~ \rho ~e^{-(\alpha\aaa^\dag-\alpha^*\aaa)} =\rho+[\alpha\aaa^\dag-\alpha^*\aaa,\rho]+ O(|\alpha|^2)
\end{equation*}
Consequently
\begin{align*}
\tr{\bar\rho~\DD_{\alpha_k}(\rhop_k)}=
\tr{\bar\rho~\rhop_k}
+\tr{\bar\rho~ [\alpha_k\aaa^\dag-\alpha_k^*\aaa,\rhop_k]}+O(|\alpha_{k}|^2).
\end{align*}
 Since  $\alpha_k=\epsilon \tr{\bar\rho[\rhop_{k},\aaa]}$, we get
\begin{align*}
\tr{\bar\rho~\DD_{\alpha_k}(\rhop_k)}=
\tr{\bar\rho~\rhop_k} + 2\epsilon\left|\tr{\bar\rho[\rhop_{k},\aaa]}\right|^2 + O(\epsilon^2).
\end{align*}
Thus for  $\epsilon>0$ small enough and uniformly in $\rhop_k\in \XX$
$$
\tr{\bar\rho~\DD_{\alpha_k}(\rhop_k)} \geq \tr{\bar\rho~\rhop_k} + \epsilon\left|\tr{\bar\rho [\rhop_k,\aaa]}\right|^2
.
$$
Using the fact that $f$ is increasing and $f(x+y) \geq f(x)+y/2$ for any $x,y>0$, we get
$$
f\left(\tr{ \bar\rho~\DD_{\alpha_{k}}(\rhop_k)}\right) \geq
f(\left(\tr{ \bar\rho~\rhop_k}\right) +   \tfrac{\epsilon}{2} \left|\tr{\bar\rho [\rhop_k,\aaa]}\right|^2
.
$$
\end{proof}

\begin{lem}\label{lem:kick}
When   $\eta >0$  is small enough,  any  state $\chi_k$ satisfying the inequality
$\tr{\bar\rho \rhop(\chi_k)} <  \eta$ yields a new state  $\chi_{k+1}$ such that
$\tr{\bar\rho \rhop(\chi_{k+1})}\geq   2 \eta$.
\end{lem}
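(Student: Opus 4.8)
The plan is to recast this random one-step statement as a deterministic fidelity estimate that must hold for \emph{both} measurement outcomes, and then to reduce it to a single uniform positivity bound that I will establish by compactness.

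First I would compute $\tr{\bar\rho~\rhop(\chi_{k+1})}$ explicitly. Using the state update~\eqref{eq:chi}, namely $\beta_{1,k+1}=\alpha_k$, $\beta_{j,k+1}=\beta_{j-1,k}$ and $\rho_{k+1}=\MM_{s_k}\circ\DD_{\beta_{d,k}}(\rho_k)$, together with the identity $\tr{\bar\rho~\KK_\alpha(X)}=\tr{\bar\rho~\DD_\alpha(X)}$ (valid because $\bar\rho$ commutes with $M_g,M_e$ and $M_g^\dag M_g+M_e^\dag M_e=\II$, exactly as exploited in Lemma~\ref{lem:martingale}), I get for the outcome $s_k\in\{g,e\}$
\[
\tr{\bar\rho~\rhop(\chi_{k+1})}=\tr{\bar\rho~\DD_{\alpha_k}(\sigma_{s_k,k})},\qquad \sigma_{s,k}:=\tfrac{\rhop_{s,k}}{P_{s,k}}\in\XX ,
\]
where invertibility of $M_g,M_e$ guarantees $P_{g,k},P_{e,k}>0$ so that $\sigma_{g,k},\sigma_{e,k}$ are genuine density matrices. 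Since $\chi_{k+1}$ is random, I must bound \emph{both} $\tr{\bar\rho~\DD_{\alpha_k}(\sigma_{g,k})}$ and $\tr{\bar\rho~\DD_{\alpha_k}(\sigma_{e,k})}$ from below by $2\eta$.

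Next I would exploit the product structure of the control. Writing $u(\alpha)=\tr{\bar\rho~\DD_\alpha(\sigma_{g,k})}$ and $v(\alpha)=\tr{\bar\rho~\DD_\alpha(\sigma_{e,k})}$, both valued in $[0,1]$, the branch $\tr{\bar\rho~\rhop_k}<\eta$ maximizes $\tr{\bar\rho~\DD_\alpha(\rhop_{g,k})}\tr{\bar\rho~\DD_\alpha(\rhop_{e,k})}=P_{g,k}P_{e,k}\,u(\alpha)v(\alpha)$, so $\alpha_k$ maximizes $u(\alpha)v(\alpha)$ over $|\alpha|\le\bar\alpha$, and the two quantities to bound are precisely $u(\alpha_k)$ and $v(\alpha_k)$. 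Because $0\le u,v\le 1$, at the maximizer
\[
u(\alpha_k)\ge u(\alpha_k)v(\alpha_k)=\max_{|\alpha|\le\bar\alpha}u(\alpha)v(\alpha),\qquad v(\alpha_k)\ge \max_{|\alpha|\le\bar\alpha}u(\alpha)v(\alpha).
\]
Hence it suffices to produce a constant $c>0$, independent of the state, with $\max_{|\alpha|\le\bar\alpha}u(\alpha)v(\alpha)\ge c$; then any $\eta\le c/2$ forces both new fidelities to be $\ge c\ge 2\eta$, which is the claim.

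The remaining and main task is this uniform bound. Set $\Psi(\chi)=\max_{|\alpha|\le\bar\alpha}\tr{\bar\rho~\DD_\alpha(\sigma_g(\chi))}\tr{\bar\rho~\DD_\alpha(\sigma_e(\chi))}$. For fixed $\chi$, $\alpha\mapsto u(\alpha)=\bra{\bar n}D_\alpha\sigma_g D_{-\alpha}\ket{\bar n}$ is real-analytic in $(\mathrm{Re}\,\alpha,\mathrm{Im}\,\alpha)$ and not identically zero: if it vanished identically then $D_{-\alpha}\ket{\bar n}\in\ker\sigma_g$ for all $\alpha$, but the vectors $\{D_{-\alpha}\ket{\bar n}\}$ span $\CC^{\nmax+1}$ since the anti-Hermitian generator $\aaa-\aaa^\dag$ is tridiagonal with nonzero off-diagonal entries, making $\ket{\bar n}$ a cyclic vector; this would force $\sigma_g=0$, impossible. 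Thus $\{u=0\}$ and $\{v=0\}$ have zero Lebesgue measure, their union cannot cover the disk $|\alpha|\le\bar\alpha$, and $\Psi(\chi)>0$ pointwise. As $\sigma_g(\chi),\sigma_e(\chi)$ depend continuously on $\chi$ and maximizing a continuous function over the compact set $|\alpha|\le\bar\alpha$ preserves continuity, $\Psi$ is continuous; restricting $\chi$ to the compact forward-invariant set $\XX\times\{|\beta_l|\le\bar\alpha\}$ then yields $c:=\min\Psi>0$.

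The step I expect to be the obstacle is exactly this passage from pointwise to uniform positivity of $\Psi$. Pointwise positivity rests on completeness (cyclicity) of the displaced number states, which must be checked in the truncated model; and the upgrade to a uniform $c$ forces the $\beta$-coordinates to live in a compact set, which is why all controls must be kept bounded by $\bar\alpha$ — this holds on the second branch by construction and on the first branch provided $\epsilon$ is small, since $|\alpha_k|=\epsilon\,|\tr{\bar\rho~[\rhop_k,\aaa]}|$ is uniformly bounded. Everything else, namely the one-step fidelity identity and the elementary $u,v\le 1$ trick, is routine.
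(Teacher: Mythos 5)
Your strategy is essentially the paper's own: the one-step identity $\tr{\bar\rho~\rhop(\chi_{k+1})}=\tr{\bar\rho~\DD_{\alpha_k}(\sigma_{s_k,k})}$ obtained from $\tr{\bar\rho~\KK_\alpha(\cdot)}=\tr{\bar\rho~\DD_\alpha(\cdot)}$, the remark that at the argmax each factor dominates the product because both factors are at most $1$, pointwise positivity of the maximized product by analyticity in $(\Re\alpha,\Im\alpha)$, a compactness argument to make the bound uniform, and finally $\eta\le c/2$. (The paper keeps the unnormalized $\rhop_{g,k},\rhop_{e,k}$, whose traces lie in $[\zeta,1]$ by invertibility of $M_g,M_e$, and minimizes $F(\rho_g,\rho_e)=\max_{|\alpha|\le\bar\alpha}\tr{\bar\rho~\DD_\alpha(\rho_g)}\tr{\bar\rho~\DD_\alpha(\rho_e)}$ over the compact set $\XX_\zeta\times\XX_\zeta$; your normalization by $P_{s,k}$ is a cosmetic variant of this.) However, the sub-step you yourself flagged as the crux is justified incorrectly. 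In the truncated model, $\ket{\bar n}$ is in general \emph{not} cyclic for the single generator $\aaa^\dag-\aaa$: take $\nmax=2$, $\bar n=1$; then $(\aaa^\dag-\aaa)\ket{1}=\sqrt{2}\ket{2}-\ket{0}$ and $(\aaa^\dag-\aaa)^2\ket{1}=-3\ket{1}$, so the Krylov space is the two-dimensional span of $\ket{1}$ and $\sqrt{2}\ket{2}-\ket{0}$, the orthogonal complement of $\sqrt{2}\ket{0}+\ket{2}$. So "tridiagonal with nonzero off-diagonal entries" does not give cyclicity from a middle basis vector, and your one-line spanning argument fails as written. The conclusion (the orbit $\{D_{-\alpha}\ket{\bar n}\}_{\alpha\in\CC}$ spans) is nevertheless true, and the paper's proof shows exactly how: differentiating $\bket{\psi|D_\alpha|\bar n}\equiv 0$ $j$ times in $\Re\alpha$ \emph{and} in $\Im\alpha$ yields $\bket{\psi~|~(\aaa^\dag-\aaa)^j|\bar n}=\bket{\psi~|~(\aaa^\dag+\aaa)^j|\bar n}=0$ for all $j$; using \emph{both} quadratures at each order separates the two leading components $\ket{\bar n+j}$ and $\ket{\bar n-j}$, and an induction on $j$ kills every coordinate of $\psi$. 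You need this two-generator induction, not single-generator cyclicity.

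A second, smaller hole is your compactness domain. Theorem~\ref{thm:ClosedLoop} allows an arbitrary initial condition $\chi_0\in\XX\times\CC^d$, so the restriction to $\{|\beta_l|\le\bar\alpha\}$ is not available at the first steps: forward invariance only constrains controls already produced by the feedback, not the initial $\beta$'s. Your own reduction repairs this for free: $\Psi(\chi)$ depends on $\chi$ only through the pair $(\sigma_g(\chi),\sigma_e(\chi))$, which always lies in the compact set $\XX\times\XX$, so you should minimize $F$ over density-matrix pairs (as the paper does over $\XX_\zeta\times\XX_\zeta$) rather than over a truncated $\chi$-space; then no boundedness assumption on the $\beta$'s is needed and the bound $c>0$ is genuinely uniform. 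With these two repairs — the two-quadrature induction and the change of minimization domain — your argument coincides with the paper's proof.
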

\begin{proof}
Since $M_g$ and $M_e$ are invertible, there exists $ \zeta\in]0,1[$ such that,  for any $\chi$,
$\tr{\rhop_g(\chi)}\geq \zeta$ and $\tr{\rhop_e(\chi)}\geq \zeta$ ($\rhop_g$ and $\rhop_e$ are defined in~\eqref{eq:prediction}).  Denote by $\XX_\zeta$ the compact  set  of Hermitian semi-definite positive matrices with trace in $[\zeta,1]$: for any $\chi$,  $\rhop_g(\chi)$ and $\rhop_e(\chi)$ are in $\XX_\zeta$. Let us prove first that, for any $\rho_g, \rho_e\in\XX_\zeta$
\begin{equation}\label{eq:maxalpha}
         \max_{|\alpha|\leq \bar\alpha}
\left( \tr{\bar\rho~ \DD_\alpha (\rho_g)}\tr{\bar\rho~ \DD_\alpha (\rho_e)} \right)>0.
\end{equation}
If for some $\rho_g,\rho_e\in\XX_\zeta$, the above maximum is zero, then for all $\alpha\in \CC$ (analyticity of $\DD_\alpha$ versus $\Re(\alpha)$ and $\Im(\alpha)$):
$$
\tr{\bar\rho~ \DD_\alpha (\rho_g)}\tr{\bar\rho~ \DD_\alpha (\rho_e)}\equiv 0
.
$$
This implies that  either $\tr{\bar\rho~ \DD_\alpha (\rho_g)}\equiv 0$ or $\tr{\bar\rho~ \DD_\alpha (\rho_e)}\equiv 0$ (if the product of two analytic functions is zero, one of them is zero). Take $\rho\in\XX_\zeta$
such that  $\tr{\bar\rho~ \DD_\alpha (\rho)}\equiv 0$. We can decompose $\rho$ as a sum of projectors,
$$\rho=\sum_{\nu=1}^m \lambda_\nu \ket{\psi_\nu}\bra{\psi_\nu},$$
where $\lambda_\nu$ are strictly positive eigenvalues, $\sum_\nu \lambda_\nu \in[\zeta,1]$,  and $\psi_\nu$ are the associated normalized eigenstates of $\rho$, $1\leq m \leq \nmax$. Since $\tr{\bar\rho~ \DD_\alpha (\rho)}\equiv 0$ for all $\alpha\in\CC$ , we have for all $\nu$,
$\bket{\psi_\nu|~D_\alpha| \bar n}=0$. Fixing one $\nu\in\{1,\cdots,m\}$ and taking $\psi=\psi_\nu$
noting that $D_\alpha=\exp(\Re(\alpha)(\aaa^\dag-\aaa)+\imath\Im(\alpha)(\aaa^\dag+\aaa)$ and deriving $j$ times versus $\Re(\alpha)$ and $\Im(\alpha)$ around $\alpha=0$ we get,
$$
\bket{\psi~|~(\aaa^\dag-\aaa)^j|\bar n}=\bket{\psi~|~(\aaa^\dag+\aaa)^j|\bar n}=0\qquad \forall j \geq 0
.
$$
With $j=0$, we get, $\bket{\psi~|\bar n}=0$. With $j=1$ we get
$\bket{\psi~|\bar n-1}=\bket{\psi~|\bar n+1}=0$ since $\aaa^\dag \ket{\bar n} = \sqrt{\bar n+1} \ket{\bar n+1}$ and
$\aaa \ket{\bar n} = \sqrt{\bar n} \ket{\bar n-1}$. With $j=2$ and using the null  Hermitian products  obtained for $j=0$ and $1$, we deduce that
$\bket{\psi~|\bar n-2}=\bket{\psi~|\bar n+2}=0$, since $\aaa \aaa^\dag \ket{\bar n}$ and $\aaa^\dag  \aaa \ket{\bar n}$  are colinear to $\ket{\bar n}$. Similarly for any $j $ and using the null  Hermitian products obtained for $j^\prime < j$, we deduce that $\bket{\psi~| \max(0,\bar n-j)}=\bket{\psi~| \min(\nmax,\bar n+j)}=0$. Thus, for any $n$, $\bket{\psi|n}=0$,  $\ket{\psi}=0$ and
we get a contradiction. Thus~\eqref{eq:maxalpha} holds true for any $\rho_g,\rho_e\in\XX_\zeta$.

The map $F$
$$(\rho_g,\rho_e) \mapsto F(\rho_g,\rho_e)=         \max_{|\alpha|\leq \bar\alpha}
\left( \tr{\bar\rho~ \DD_\alpha (\rho_g)}\tr{\bar\rho~ \DD_\alpha (\rho_e)} \right)$$
 is continuous. We have proved that for all $\rho_g,\rho_e$ in the  compact set $\XX_\zeta$, $F(\rho_g,\rho_e) >0$. Thus  exists $\delta >0$ such that $F(\rho_g,\rho_e)  \geq \delta $ for any $\rho_g,\rho_e\in\XX_\zeta$. Take  $\tilde\alpha$ an argument of the  maximum,
$$
 \tr{\bar\rho~ \DD_{\tilde\alpha}(\rho_g)}\tr{\bar\rho~ \DD_{\tilde\alpha} (\rho_e)}= \max_{|\alpha|\leq \bar\alpha}
\left( \tr{\bar\rho~ \DD_\alpha (\rho_g)}\tr{\bar\rho~ \DD_\alpha (\rho_e)} \right) \geq \delta
$$
Since (Cauchy-Schwartz inequality  for the Frobenius product)  $\tr{\bar\rho~ \DD_{\tilde\alpha}(\rho_g)}\leq 1$ and $\tr{\bar\rho~ \DD_{\tilde\alpha}(\rho_e)}\leq 1$, we have  $\tr{\bar\rho~ \DD_{\tilde\alpha}(\rho_g)} \geq\delta$ and  $\tr{\bar\rho~ \DD_{\tilde\alpha}(\rho_e)}\geq \delta$.

Take now $\eta < \tfrac{\delta}{2}$ and $\chi_k$ such that $\tr{\bar\rho \rhop(\chi_k)} \leq \eta$. According to~\eqref{eq:feedmain},
$\alpha_k$ is chosen as an  argument of
$$
\max_{|\alpha|\leq \bar\alpha}
\left( \tr{\bar\rho~ \DD_\alpha (\rhop_{g,k})}\tr{\bar\rho~ \DD_\alpha (\rhop_{e,k})} \right)
$$
where $\rhop_{g,k},\rhop_{e,k}\in\XX_\zeta$.
Thus $\tr{\bar\rho~ \DD_{\alpha_k}(\rhop_{g,k})}\geq \delta$ and $\tr{\bar\rho~ \DD_{\alpha_k}(\rhop_{e,k})} \geq \delta$. But either
$\rhop_{k+1} = \tfrac{1}{P_{g,k}}\KK_{\alpha_k}(\rhop_{g,k})$ or $ \rhop_{k+1}= \tfrac{1}{P_{e,k}} \KK_{\alpha_k}(\rhop_{g,k})$
where   $0 < P_{g,k}, P_{e,k} <1$. Since we have the identity  $\tr{\bar\rho \KK_{\alpha}(\rho)}=\tr{\bar\rho \DD_{\alpha}(\rho)}$ because $\bar\rho$ commutes with $M_g$ and $M_e$, we conclude that
$\tr{\bar\rho~ \rhop(\chi_{k+1}) }\geq \delta \geq 2 \eta$.
\end{proof}

\begin{lem}\label{lem:doobs}
Initializing the Markov process $\chi_k$  within the set $\{\chi~|~\tr{\bar\rho \rhop(\chi)}\geq 2\eta\}$, $\chi_k$ will never hit the set $\{\chi~|~\tr{\bar\rho \rhop(\chi)} < \eta\}$ with a probability
$$
p>\frac{\eta}{1-\eta}>0.
$$
\end{lem}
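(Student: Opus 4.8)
The plan is to read $M_k := \tr{\bar\rho\,\rhop(\chi_k)}$ as a nonnegative bounded process that is a submartingale \emph{as long as it stays above the threshold} $\eta$, and then to bound the probability of ever falling below $\eta$ by applying a maximal inequality to the complementary supermartingale $1-M_k$. Indeed, Lemma~\ref{lem:martingale} gives, whenever $\tr{\bar\rho\,\rhop(\chi_k)}\geq\eta$,
$$
\EE{M_{k+1}~|~\chi_k}\geq M_k+\epsilon\left|\tr{\bar\rho~[\rhop_k,\aaa]}\right|^2\geq M_k ,
$$
so on the region $\{\chi~|~\tr{\bar\rho\,\rhop(\chi)}\geq\eta\}$ the process $M_k$ is a submartingale and, equivalently, $1-M_k\in[0,1]$ is a nonnegative supermartingale.

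I would then localize with the stopping time $\tau=\inf\{k~|~\tr{\bar\rho\,\rhop(\chi_k)}<\eta\}$, i.e. the first entrance into the forbidden set, and work with the stopped process $Y_k:=1-M_{k\wedge\tau}$. For $k<\tau$ the current state lies in $\{M\geq\eta\}$, where the supermartingale inequality above holds, while after $\tau$ the process is frozen; hence $Y_k$ is a genuine nonnegative supermartingale, with initial value $Y_0=1-M_0\leq 1-2\eta$ since we start inside $\{\,\tr{\bar\rho\,\rhop}\geq 2\eta\,\}$.

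The conclusion then follows from the standard maximal inequality for nonnegative supermartingales (a form of Doob's inequality), $\PP{\sup_k Y_k\geq\lambda}\leq \EE{Y_0}/\lambda$. On the event $\{\tau<\infty\}$ one has $M_\tau<\eta$, hence $Y_\tau>1-\eta$, so $\{\tau<\infty\}\subseteq\{\sup_k Y_k\geq 1-\eta\}$; taking $\lambda=1-\eta$ yields $\PP{\tau<\infty}\leq\frac{1-2\eta}{1-\eta}$ and therefore
$$
\PP{\tau=\infty}\geq 1-\frac{1-2\eta}{1-\eta}=\frac{\eta}{1-\eta} ,
$$
which is exactly the claimed lower bound on the probability of never hitting $\{\tr{\bar\rho\,\rhop}<\eta\}$ (strictness coming from the strictly positive submartingale gain, or simply from starting strictly above $2\eta$).

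The one point requiring care --- and the reason for the stopping-time localization --- is that Lemma~\ref{lem:martingale} provides no control once $M_k$ drops below $\eta$, where the feedback switches to the second branch of~\eqref{eq:feedmain}; working with $Y_k=1-M_{k\wedge\tau}$ confines every use of the submartingale property to the good region, so the stopped process is a bona fide supermartingale on all of $\NN$. Uniformity of $p$ over the initial condition is then automatic, since the final bound $\frac{\eta}{1-\eta}$ depends only on $\eta$ and not on $\chi_0$.
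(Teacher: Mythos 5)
Your proof is correct and follows essentially the same route as the paper: the paper likewise applies Doob's maximal inequality (Theorem~\ref{thm:doob}) to the supermartingale $1-\tr{\bar\rho\,\rhop(\chi_k)}$ on the region $\left\{\chi~|~\tr{\bar\rho\,\rhop(\chi)}\geq\eta\right\}$, with threshold $1-\eta$ and initial value at most $1-2\eta$, obtaining the same bound $\frac{\eta}{1-\eta}$. Your explicit stopping-time localization is precisely what the hypothesis of Theorem~\ref{thm:doob} (where $k(x)\geq 0$ is required only on $Q_\lambda$) already encapsulates, so the two arguments coincide.
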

\begin{proof} We know from lemma~\ref{lem:martingale} that the process $1- \tr{\bar\rho \rhop(\chi)}$ is  a super martingale in the set $\{\chi~|~\tr{\bar\rho \rhop(\chi)}\geq \eta\}$. Therefore, one only needs to use the Doobs inequality recalled in appendix:
$$
\PP{\underset{0\leq k<\infty}{\sup} (1-\tr{\bar\rho \rhop(\chi_k)})> 1-\eta}> \frac{1-\tr{\bar\rho \rhop(\chi_0)}}{1-\eta}\geq \frac{1-2\eta}{1-\eta},
$$
and thus $p> 1-\frac{1-2\eta}{1-\eta}=\frac{\eta}{1-\eta}$.
\end{proof}

\begin{lem}\label{lem:invariance}
Sample paths $\chi_k$ remaining in the set $\{\tr{\bar \rho\rhop(\chi)} \geq \eta\}$ converges almost surely  to $\bar\chi$ as $k\rightarrow\infty$.
\end{lem}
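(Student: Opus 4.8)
The plan is to transplant the invariance argument of Theorem~\ref{thm:OpenLoop} to the enlarged state $\chi$, restricted to the region $\{\tr{\bar\rho\rhop(\chi)}\geq\eta\}$ where Lemma~\ref{lem:martingale} supplies the submartingale inequalities. Along a sample path that never leaves this region, $V(\chi_k)=f(\tr{\bar\rho\rhop_k})$ is a bounded submartingale, so by Doob's convergence theorem (Theorem~\ref{thm:conv_martingale1}, Appendix~\ref{append:stoch-stab}) it converges almost surely, and so does $\tr{\bar\rho\rhop_k}$. First I would invoke Kushner's invariance theorem for the confined process and the submartingale $V$, concluding that such a path converges in probability to the largest invariant subset $\Inv$ of the region on which the expected increment of $V$ vanishes, i.e. on which \emph{both} non-negative terms of~\eqref{eq:martingale} are zero.

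Next I would characterize $\Inv$. Vanishing of the first term gives $\tr{\bar\rho[\rhop,\aaa]}=0$; since inside the region the feedback~\eqref{eq:feedmain} is exactly $\alpha=\epsilon\tr{\bar\rho[\rhop,\aaa]}$, this forces $\alpha\equiv 0$ on $\Inv$. Then the shift dynamics~\eqref{eq:chi} feeds only zeros into the delay register, so after $d$ steps every orbit in $\Inv$ reaches a zero-delay state $(\rho,0,\ldots,0)$, which by invariance again lies in $\Inv$. For such a state $\rhop=\KK_0^d(\rho)$, and since $\bar\rho$ commutes with $M_g,M_e$ one has $\tr{\bar\rho\KK_0(\sigma)}=\tr{\bar\rho\sigma}$, whence $\tr{\bar\rho\rhop}=\tr{\bar\rho\rho}\geq\eta>0$ and the second vanishing condition collapses to $\tr{\bar\rho\MM_g(\rho)}=\tr{\bar\rho\MM_e(\rho)}$, that is $\tfrac{\cos^2\varphi_{\bar n}}{\tr{M_g\rho M_g}}=\tfrac{\sin^2\varphi_{\bar n}}{\tr{M_e\rho M_e}}$. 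With $M_g^2+M_e^2=\II$ this yields $\tr{M_g\rho M_g}=\cos^2\varphi_{\bar n}$, exactly the relation met in Theorem~\ref{thm:OpenLoop}. Applying it also to the successor $\MM_g(\rho)\in\Inv$ gives $\tr{M_g^4\rho}=\trr{M_g^2\rho}$, so by Cauchy–Schwartz and the non-degeneracy of $M_g^2$ the state $\rho$ is a Fock projector $\ket m\bra m$ with $\cos^2\varphi_m=\cos^2\varphi_{\bar n}$, forcing $m=\bar n$. Hence the zero-delay part of $\Inv$ is the single point $\bar\chi$.

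Finally I would upgrade to almost-sure convergence to $\bar\chi$ itself. Summing the increment inequality of Lemma~\ref{lem:martingale} along the bounded submartingale shows $\sum_k\EE{|\tr{\bar\rho[\rhop_k,\aaa]}|^2}<\infty$, so $\alpha_k\to0$ and therefore every delayed control $\beta_{j,k}\to0$ almost surely; thus all $\omega$-limit points of the path are zero-delay states, which by the previous paragraph must equal $\bar\chi$. For the cavity part, the in-probability limit of $V$ furnished by Kushner's theorem is $f(1)=1$, and it must coincide with the almost-sure limit of the submartingale $V(\chi_k)$; since $f$ is strictly increasing with $f(1)=1$ this gives $\tr{\bar\rho\rhop_k}\to1$, equivalently $\bket{\bar n|\rho_k|\bar n}\to1$ once the delays vanish, which forces $\rho_k\to\bar\rho$. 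Combining, $\chi_k\to\bar\chi$ almost surely.

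I expect the delicate point to be the reduction by the delay register: justifying that on $\Inv$ the control is identically zero and that the induced zero-delay sub-dynamics is again invariant, so that the clean Cauchy–Schwartz and non-degeneracy argument of Theorem~\ref{thm:OpenLoop} transplants verbatim. A secondary technical care is the passage from Kushner's convergence in probability to almost-sure convergence of the full state $\chi_k$, which I would handle exactly as in the last part of Theorem~\ref{thm:OpenLoop}, using the almost-sure limits of the bounded (sub)martingales $V(\chi_k)$ and $\tr{\bar\rho\rhop_k}$.
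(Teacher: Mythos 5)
Your proposal is correct and follows essentially the same route as the paper: Kushner's invariance theorem applied to the bounded submartingale $V$, identification of the invariant set with $\{\bar\chi\}$ via $\alpha\equiv 0$, the emptied delay register, and the Cauchy--Schwartz/non-degeneracy argument recycled from theorem~\ref{thm:OpenLoop}, followed by the upgrade from in-probability to almost-sure convergence using Doob's convergence of the bounded submartingale $\tr{\bar\rho\,\rhop(\chi_k)}$. Your additional step, summing the increment inequality of lemma~\ref{lem:martingale} to get $\sum_k\EE{\left|\tr{\bar\rho~[\rhop_k,\aaa]}\right|^2}<\infty$ and hence $\beta_{j,k}\to 0$ almost surely, only makes explicit the convergence of the delay coordinates that the paper's proof leaves implicit.
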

\begin{proof}
We apply first  the Kushner's invariance theorem to the Markov process $\chi_k$ with the sub-martingale function $V(\chi_k)$. It ensures   convergence in probability towards  $\Inv$ the largest invariant set  attached to this sub-martingale (see appendix). Let us prove that $\Inv$ is reduced to $\{\bar\chi\}$.

 By inequality~\eqref{eq:martingale}, if $(\rho,\beta_1, \ldots,\beta_d)=\chi$ belongs to $\Inv$ then
$\tr{\bar\rho~[\rhop(\chi),\aaa]}$, i.e., $\alpha\equiv 0$ and also
\begin{multline*}
\tr{\bar\rho~ \DD_{\alpha}\circ\KK_{\beta_{1}}\circ\cdots\circ\KK_{\beta_{d-1}}
              \circ\MM_g\circ\DD_{\beta_{d}}(\rho)}=
\\
              \tr{\bar\rho~ \DD_{\alpha}\circ\KK_{\beta_{1}}\circ\cdots\circ\KK_{\beta_{d-1}}
              \circ\MM_e\circ\DD_{\beta_{d}}(\rho)}
.
\end{multline*}
Invariance  associated $\alpha\equiv 0$ implies that $\beta_1=\ldots=\beta_d=0$. Thus the above equality reads
$$
\tr{\bar\rho~ \MM_g(\rho)}=\tr{\bar\rho~ \MM_e(\rho)}
$$
where we have used the fact that, for any $\varrho\in\XX$, $\tr{\bar\rho ~\KK_0(\varrho)}= \tr{\bar\rho ~\DD_0(\varrho)}= \tr{\bar\rho \varrho}$. Then
$\rho$ satisfies
$$\tr{\bar\rho M_g\rho M_g^\dag}\tr{M_e\rho M_e^\dag}=\tr{\bar\rho M_e\rho M_e^\dag}\tr{M_g\rho M_g^\dag}$$
that reads, since $M_g^\dag\bar\rho M_g= \cos^2\!\varphi_{\bar n} ~\bar\rho$, $M_e^\dag\bar\rho M_e= \sin^2\!\varphi_{\bar n} ~\bar\rho$ and $\tr{\bar\rho \rho} >0$,
$$
\cos^2\!\varphi_{\bar n} \tr{M_e\rho M_e^\dag} = \sin^2\!\varphi_{\bar n} \tr{M_g\rho M_g^\dag}
.
$$
Since $\tr{M_e\rho M_e^\dag} +\tr{M_g\rho M_g^\dag}=1$, we recover
$
\tr{M_g\rho M_g^\dag}=\cos^2\!\varphi_{\bar n}
$
the same  condition  as the one appearing at the end of the proof of theorem~\ref{thm:OpenLoop}. Similar invariance arguments combined with $\tr{\bar\rho \rho}  >0$ imply then  $\rho=\bar\rho$. Thus $\Inv$ is reduced to $\{\bar\chi\}$.

Consider now  the event $P_{\geq \eta}=\{\forall k \geq 0, ~\tr{\bar\rho \rhop(\chi_k)}\geq \eta\}\}$. Convergence of $\chi_k$ in probability towards $\bar\chi$ means that
$$
  \forall\delta>0, \quad \lim_{k \rightarrow\infty}\PP{\|\chi_k-\bar\chi\|>\delta~|~P_{\geq \eta}}=0.
$$
where $\|\cdot\|$ is any norm on the $\chi$-space.
The continuity of $\chi\mapsto \tr{\bar\rho \rhop(\chi)}$ implies that, $\forall\delta>0$,
$$
\lim_{k \rightarrow\infty}\PP{\tr{\bar\rho \rhop(\chi_k)}<1-\delta~|~P_{\geq \eta}}=0.
$$
As $0\leq \tr{\bar\rho \rhop(\chi)}\leq 1$, we have
$$
1\geq \EE{\tr{\bar\rho \rhop(\chi_k)}~|~P_{\geq \eta}}~\geq ~ (1-\delta)\PP{1-\delta \leq \tr{\bar\rho \rhop(\chi_k)}~|~P_{\geq \eta}}
.
$$
Thus
$$
1\geq \EE{\tr{\bar\rho \rhop(\chi_k)}~|~P_{\geq \eta}}~\geq ~ 1-\delta-\PP{\tr{\bar\rho \rhop(\chi_k)}<1-\delta~|~P_{\geq \eta}}.
$$
and consequently,  $\forall\delta>0$,
$\underset{k\rightarrow\infty}{\limsup}~ \EE{\tr{\bar\rho \rhop(\chi_k)}~|~P_{\geq \eta}}\geq 1-\delta$, i.e.,
$$
\lim_{k\rightarrow\infty} \EE{\tr{\bar\rho \rhop(\chi_k)}~|~P_{\geq \eta}}=1.
$$
The process $\tr{\bar\rho\rhop(\chi_k)}$ is a bounded sub-martingale and therefore, by Theorem~\ref{thm:conv_martingale1} of the Appendix~\ref{append:stoch-stab}, we know that it converges for almost all trajectories remaining in the set $\{\tr{\bar \rho\rhop(\chi)} \geq \eta\}$. Calling the limit random variable $\text{fid}_\infty$, we have by dominated convergence theorem
$$
\EE{\text{fid}_\infty}=\EE{\lim_{k\rightarrow\infty} \tr{\bar\rho \rhop(\chi_k)}~|~P_{\geq \eta}}=\lim_{k\rightarrow \infty}\EE{\tr{\bar\rho\rhop(\chi_k)}~|~P_{\geq \eta}}=1.
$$
This trivially proves that $\text{fid}_\infty\equiv 1$ almost surely and finishes the proof of the Lemma.
\end{proof}

\subsection{Convergence rate around  the target state}\label{ssec:closed-rate}
Around the target state $\bar\chi=(\bar\rho, 0, \ldots, 0)$ the closed-loop dynamics reads
 \begin{align*}
    \rho_{k+1}&=\MM_{s_k}(\DD_{\beta_{d,k}}(\rho_k))
    \\
    \beta_{1,k+1}&= \epsilon\tr{[\aaa,\bar\rho]~\KK_{\beta_{1,k}}\circ\cdots \circ\KK_{\beta_{d,k}}(\rho_k)}
    \\
    \beta_{2,k+1} &= \beta_{1,k}
    \\
    & \vdots
    \\
   \beta_{d,k+1} &= \beta_{d-1,k}
   .
\end{align*}
Set $\chi=\bar\chi+ \delta \chi$ with $\delta\chi=(\delta \rho, \delta\beta_1, \ldots, \delta\beta_d)$ small. Computations
based on
\begin{align*}
 & \DD_{\delta\beta}(\bar\rho) =
     \bar\rho+ \left(\delta\beta [\aaa^\dag,\bar\rho]-\delta\beta^* [\aaa,\bar\rho] \right) + O(|\delta\beta|^2),
  \\
 & \KK_{\delta\beta}(\bar\rho) =
     \KK_0(\bar\rho)+ \cos{\vartheta}\left(\delta\beta [\aaa^\dag,\bar\rho]-\delta\beta^* [\aaa,\bar\rho] \right) + O(|\delta\beta|^2),
  \\
 & \KK_0(\bar\rho)=\bar\rho, \quad \KK_0 ([\aaa^\dag,\bar\rho]) = \cos{\vartheta}~ [\aaa^\dag,\bar\rho]
,\quad
  \KK_0 ([\aaa,\bar\rho]) = \cos{\vartheta}~ [\aaa,\bar\rho],
\\
& \tr{[\aaa,\bar\rho][\aaa^\dag,\bar\rho]} = -(2\bar n+1) \quad \text{and} \quad \tr{[\aaa,\bar\rho]^2}=0,
\end{align*}
yield  the following linearized closed-loop  system
{\small
\begin{equation}\label{eq:closedLoopLin}
\begin{array}{rl}
  \delta\rho_{k+1}&= A_{s_k}
      \left( \delta\rho_k+ \delta\beta_{d,k} [\aaa^\dag,\bar\rho] - \delta\beta_{d,k}^* [\aaa,\bar\rho] \right)
         A_{s_k}^\dag   -  \tr{A_{s_k}\delta\rho_k A_{s_k}^\dag}\bar\rho
\\
  \delta \beta_{1,k+1} &=
  -\epsilon (2\bar n +1) \left(\sum_{j=1}^{d} \cos^j\!{\vartheta} ~\delta\beta_{j,k}\right)
  + \epsilon \cos^d\!{\vartheta} ~\tr{\delta \rho_k [\aaa,\bar\rho]}
\\
    \delta\beta_{2,k+1} &= \delta\beta_{1,k}
    \\
    & \vdots
    \\
   \delta\beta_{d,k+1} &= \delta\beta_{d-1,k}
\end{array}
\end{equation}}
where $s_k\in\{g,e\}$, the random matrices $A_{s_k}$ are given by $A_g=\frac{M_g}{\cos\varphi_{\bar n}}$ with probability $P_g=\cos^2\!\varphi_{\bar n}$ and $A_e=\frac{M_e}{\sin\varphi_{\bar n}}$ with probability $P_e=\sin^2\!\varphi_{\bar n}$.

Set  $\delta\rho^{n_1,n_2}_k=\bket{n_1|\delta\rho_k|n_2}$ for any $n_1, n_2\in\{0, \ldots, \nmax\}$. Since $\tr{\delta\rho_k} \equiv 0$, we  exclude here the case  $(n_1,n_2)=(\bar n, \bar n)$ because
$\delta\rho_k^{\bar n, \bar n} = - \sum_{n\neq \bar n} \delta\rho_k^{n, n}$.
When  $(n_1,n_2)$ does not belong to $\{(\bar n-1,\bar n),(\bar n+1,\bar n),(\bar n,\bar n-1),(\bar n,\bar n+1)\}$,  we recover the open-loop linearized dynamics~\eqref{eq:deltarho}:
$$
\delta\rho_{k+1}^{n_1,n_2}= a_{s_k}^{n_1,n_2} \delta\rho_k^{n_1,n_2}
$$
where $s_k=g$ (resp. $s_k=e$) with probability  $\cos^2\!\varphi_{\bar n}$ (resp. $\sin^2\!\varphi_{\bar n}$) and where
$a_{g}^{n_1,n_2} =\tfrac{\cos\varphi_{n_1}\cos\varphi_{n_2}}{\cos^2\!\varphi_{\bar n}}$ and
$a_{e}^{n_1,n_2} =\tfrac{\sin\varphi_{n_1}\sin\varphi_{n_2}}{\sin^2\!\varphi_{\bar n}}$.
A direct adaptation of the proof of proposition~\ref{prop:OpenLoopLin} shows that the largest Lyapounov exponent $\Lambda_0$  of this dynamics is strictly negative and  given by
$$
\Lambda_0=\max_{ \text{\scriptsize $\begin{array}{c}
                   n\in\{0,\ldots,\nmax\} \\
                   n\neq \bar n-1, \bar n,\bar n+1
                 \end{array}$}
}
 \left(\cos^2\!\varphi_{\bar n}\log\left(\tfrac{|\cos\varphi_{n}|}{|\cos\varphi_{\bar n}|}\right)
    +\sin^2\!\varphi_{\bar n}\log\left(\tfrac{|\sin\varphi_n|}{|\sin\varphi_{\bar n}|}\right) \right)
.
$$

For $(n_1,n_2)\in \{(\bar n-1,\bar n),(\bar n+1,\bar n),(\bar n,\bar n-1),(\bar n,\bar n+1)\}$, we just have to consider $x=\delta\rho^{\bar n,\bar n-1}$ and $y=\delta\rho^{\bar n+1,\bar n}$ since $\delta\rho$ is Hermitian. Set $z_{j,k}=\delta\beta_{j,k}$. We deduce from~\eqref{eq:closedLoopLin} that the process $X_k=(x_k,y_k,z_{1,k}, \ldots,z_{d,k})$  is governed by
{\small
\begin{equation}\label{eq:xyz}
\begin{array}{rl}
  x_{k+1}&= a_{s_k} (x_k - \sqrt{\bar n} z_{d,k})
 \\
 y_{k+1}&= b_{s_k} (y_k + \sqrt{\bar n+1} z_{d,k})
\\
  z_{1,k+1} &=
  -\epsilon (2\bar n +1) \left(\sum_{j=1}^{d} \cos^j\!{\vartheta} z_{j,k}\right)
  + \epsilon \cos^d\!{\vartheta} \left(\sqrt{\bar n} x_k - \sqrt{\bar n+1} y_k \right)
  \\
    z_{2,k+1} &= z_{1,k}
    \\
    & \vdots
    \\
   z_{d,k+1} &= z_{d-1,k}
\end{array}
\end{equation}
}
where $s_k=g$ (resp. $s_k=e$) with probability  $\cos^2\!\varphi_{\bar n}$ (resp. $\sin^2\!\varphi_{\bar n}$) and
$$
  a_{g} =\tfrac{\cos\varphi_{\bar n-1}}{\cos\varphi_{\bar n}},\quad
  a_{e} =\tfrac{\sin\varphi_{\bar n-1}}{\sin\varphi_{\bar n}},\quad
  b_{g} =\tfrac{\cos\varphi_{\bar n+1}}{\cos\varphi_{\bar n}},\quad
  b_{e} =\tfrac{\sin\varphi_{\bar n+1}}{\sin\varphi_{\bar n}}
.
$$
Take $\mu >0$ to be defined later, set $\sigma=|\cos{\vartheta}| \in]0,1[$ and consider
 $$
V(X)=|x|+|y|+\mu\left(|z_{1}|+\sigma|z_{2}|+\cdots+\sigma^{d-1}|z_{d}|\right). $$
A direct computation exploiting~\eqref{eq:xyz} yields
{\small
\begin{multline*}
\EE{V(X_{k+1})~|~X_k} = \sigma |x_k - \sqrt{\bar n} z_{d,k}| + \sigma |y_k + \sqrt{\bar n+1} z_{d,k} |
\\
+\sigma \mu \left(|z_{1,k}|+\sigma|z_{2,k}|+\cdots+\sigma^{d-2}|z_{d-1,k}|\right)
\\
+ \epsilon \mu \left|  - (2\bar n +1) \left(\sum_{j=1}^{d} \cos^j\!{\vartheta} z_{j,k}\right)
  +  \cos^d\!{\vartheta} \left(\sqrt{\bar n} x_k - \sqrt{\bar n+1} y_k \right)
 \right|
.
\end{multline*}
}
Thus
{\small
\begin{multline*}
\EE{V(X_{k+1})~|~X_k} \leq (\sigma+ \epsilon\mu \sigma^{d}\sqrt{\bar n+1} )(|x_k | +  |y_k  |)
\\
+\sigma(1+\epsilon(2\bar n +1)) \mu
 \left(|z_{1,k}|+\cdots+\sigma^{d-2}|z_{d-1,k}|
    + \tfrac{\sigma^{1-d}(\sqrt{\bar n}+\sqrt{\bar n+1}) + \epsilon\mu(2\bar n +1)}{(1+\epsilon(2\bar n +1)) \mu }
       ~\sigma^{d-1}|z_{d,k}|\right)
.
\end{multline*}
}
Take $\mu=\tfrac{\sqrt{\bar n}+\sqrt{\bar n+1}}{\sigma^{d-1}}$, then
$$
\EE{V(X_{k+1})~|~X_k} \leq \sigma(1+2\epsilon(\bar n +1))~ V(X_k).
$$
Because  $\sigma < 1$, for $\epsilon>0$ small enough ($\epsilon < \tfrac{1-\sigma}{2(\bar n +1)}$) the norm  $V(X_k)$ is a  super-martingale converging exponentially  almost surely towards zero. Thus the largest Lyapunov exponent of the linear Markov chain~\eqref{eq:xyz} is strictly negative.
To conclude,  we have proved the following proposition:
\begin{prop}\label{prop:ClosedLoopLin}
Consider  the linear Markov chain~\eqref{eq:closedLoopLin}. For  small enough $\epsilon>0$, its largest Lyapunov exponent is strictly negative.
\end{prop}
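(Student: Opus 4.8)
The plan is to exploit the fact that, because the random matrices $A_{s_k}$ are diagonal in the Fock basis, the linear chain~\eqref{eq:closedLoopLin} decouples into independent blocks, one for each coherence $\delta\rho^{n_1,n_2}=\bket{n_1|\delta\rho|n_2}$, and that the delay line $\delta\beta_1,\dots,\delta\beta_d$ couples only to the four coherences adjacent to $(\bar n,\bar n)$, namely those indexed by $(\bar n-1,\bar n),(\bar n+1,\bar n),(\bar n,\bar n-1),(\bar n,\bar n+1)$, since these are the only entries touched by the commutators $[\aaa^\dag,\bar\rho]$ and $[\aaa,\bar\rho]$. The first step is therefore to split the state into the \emph{decoupled} coherences and the self-contained \emph{coupled} block gathering $x=\delta\rho^{\bar n,\bar n-1}$, $y=\delta\rho^{\bar n+1,\bar n}$ (Hermiticity handles the conjugate pair) together with $z_j=\delta\beta_j$. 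Since the top Lyapunov exponent of a block-diagonal system is the maximum of those of its blocks, it suffices to bound each group separately.

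For the decoupled coherences the argument is already in hand: excluding the indices $\bar n$ and $\bar n\pm1$, each entry obeys the purely multiplicative recursion~\eqref{eq:deltarho}, and a verbatim repetition of the proof of Proposition~\ref{prop:OpenLoopLin} --- the strong law of large numbers applied to $\log|a_{s_k}^{n_1,n_2}|$, followed by the strict unimodality of the weighted geometric mean of $|\cos\varphi|$ and $|\sin\varphi|$ --- shows that their largest exponent $\Lambda_0$ is strictly negative and independent of $\epsilon$.

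The heart of the matter is the coupled block~\eqref{eq:xyz}, where the measurement step contracts $x,y$ while the delay line shifts $z_j\mapsto z_{j+1}$ and injects an $O(\epsilon)$ feedback term into $z_1$. I would certify contraction with the weighted norm $V(X)=|x|+|y|+\mu\bigl(|z_1|+\sigma|z_2|+\cdots+\sigma^{d-1}|z_d|\bigr)$, where $\sigma=|\cos\vartheta|\in\,]0,1[$. Taking conditional expectations, the key input is the identity $\cos^2\!\varphi_{\bar n}|a_g|+\sin^2\!\varphi_{\bar n}|a_e|=\cos(\varphi_{\bar n}-\varphi_{\bar n-1})=\cos\vartheta$ (and its analogue for $b$), which makes both $x$ and $y$ contract by exactly $\sigma$; the triangle inequality then controls the cross terms. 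The main obstacle is the bookkeeping that ties the pieces together: the geometric weights $\sigma^{j-1}$ are chosen precisely so that the shift $z_j\mapsto z_{j+1}$ reproduces the factor $\sigma$, and the scale $\mu=(\sqrt{\bar n}+\sqrt{\bar n+1})/\sigma^{d-1}$ must be tuned so that the coupling of $z_d$ into $x,y$ and of $x,y$ back into $z_1$ is absorbed, leaving only an $O(\epsilon)$ perturbation of the contraction factor. Getting these constants to line up yields the clean bound $\EE{V(X_{k+1})~|~X_k}\le\sigma\bigl(1+2\epsilon(\bar n+1)\bigr)V(X_k)$; since $\sigma<1$, choosing $\epsilon<\tfrac{1-\sigma}{2(\bar n+1)}$ makes $V(X_k)$ a positive super-martingale that decays geometrically in expectation, hence tends to $0$ almost surely at an exponential rate, so the block's top Lyapunov exponent is strictly negative. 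Combined with $\Lambda_0<0$, this gives a strictly negative largest Lyapunov exponent for the whole chain~\eqref{eq:closedLoopLin}.
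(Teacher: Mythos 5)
Your proposal is correct and takes essentially the same route as the paper: the same split into decoupled coherences (treated by repeating the proof of Proposition~\ref{prop:OpenLoopLin}) and the coupled block~\eqref{eq:xyz}, the same weighted norm $V(X)=|x|+|y|+\mu\left(|z_1|+\sigma|z_2|+\cdots+\sigma^{d-1}|z_d|\right)$ with $\sigma=|\cos\vartheta|$ and $\mu=\tfrac{\sqrt{\bar n}+\sqrt{\bar n+1}}{\sigma^{d-1}}$, leading to the identical super-martingale bound $\EE{V(X_{k+1})~|~X_k}\le\sigma\left(1+2\epsilon(\bar n+1)\right)V(X_k)$ and the same threshold $\epsilon<\tfrac{1-\sigma}{2(\bar n+1)}$. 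Your only addition is to make explicit the averaging identity $\cos^2\!\varphi_{\bar n}|a_g|+\sin^2\!\varphi_{\bar n}|a_e|=\cos\vartheta$, which the paper uses implicitly.
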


\section{Quantum filter and separation principle}\label{sec:filter}

\subsection{Quantum filter and closed-loop simulations}\label{ssec:filter}
The feedback law~\eqref{eq:feedmain} requires  the knowledge of $(\rho_k,\beta_{1,k},\cdots,\beta_{d,k})$. When the measurement process is fully efficient and the jump model~\eqref{eq:main} admits no error, the Markov system~\eqref{eq:chi} represents a natural choice for the quantum filter to estimate the value of $\rho$. Indeed, we define the estimator $\chie_k=  (\rhoe_k,\beta_{1,k},\cdots,\beta_{d,k})$ satisfying the dynamics
\begin{equation}\label{eq:filter}
  \left\{\begin{array}{rl}
    \rhoe_{k+1}&=\MM_{s_k}(\DD_{\beta_{d,k}}(\rhoe_k))
    \\
    \beta_{1,k+1}&= \alpha_k
    \\
    \beta_{2,k+1} &= \beta_{1,k}
    \\
    & \vdots
    \\
   \beta_{d,k+1} &= \beta_{d-1,k}.
\end{array}\right.
\end{equation}
Note that, similarly to any observer-controller structure, the jump result, $s_k=g$ or $e$, is the output of the physical system~\eqref{eq:main} but the feedback control $\alpha_k$ is a function of the estimator $\rhoe$. Indeed, $\alpha_k$ is defined as in~\eqref{eq:feedmain}:\small
\begin{equation}\label{eq:feedfilter}
    \alpha_{k}=\left\{
    \begin{array}{ll}
        \epsilon \tr{\bar\rho~ [\rhope_{k},\aaa]}
         & \mbox{ if } \tr{\bar\rho\rhope_{k}} \ge \eta
         \\
     \underset{|\alpha|\le\bar\alpha}{\text{argmax}}
        \left( \tr{\bar\rho~\DD_\alpha(\rhope_{g,k})}\tr{\bar\rho~\DD_\alpha(\rhope_{e,k})}\right)
          & \mbox{ if } \tr{\bar\rho\rhope_{k}} < \eta  \\
    \end{array}
    \right.
\end{equation}\normalsize
where the predictor's state $\rhope_k$ is defined as follows:
$$
\left\{
\begin{array}{l}
 \rhope_k=\KK_{\alpha_{k-1}}\circ\ldots \circ \KK_{\alpha_{k-d}}(\rhoe_k)
\\
  \rhope_{g,k}=\KK_{\alpha_{k-1}}\circ\ldots \circ \KK_{\alpha_{k-d+1}}(M_g D_{\alpha_{k-d}}\rhoe_k D_{\alpha_{k-d}}^\dag M_g^\dag)
\\
  \rhope_{g,k}=\KK_{\alpha_{k-1}}\circ\ldots \circ \KK_{\alpha_{k-d+1}}(M_e D_{\alpha_{k-d}}\rhoe_k D_{\alpha_{k-d}}^\dag M_e^\dag)
\end{array}
\right.
$$
We will see through this section that, even if do not have any a priori knowledge of the initial state of the physical system, the choice of the feedback law through the above quantum filter can ensure the convergence of the system towards the desired Fock state. Indeed, we prove a semi-global robustness of the feedback scheme with respect to the choice of the initial state of the quantum filter.

Before going through the details of  this robustness analysis, let us illustrate it through some numerical simulations. In the simulations of Figure~\ref{fig:filter1}, we assume no a priori knowledge on the initial state of the system. Therefore, we initialize the filter equation at the maximally mixed state $\rhoe_0=\frac{1}{\nmax+1}\II_{(\nmax+1)\times (\nmax+1)}$. Computing the feedback control through the above quantum filter and injecting it to the physical system modeled by~\eqref{eq:main}, the fidelity (with respect to the target Fock state) of the closed-loop trajectories of the physical system are illustrated in the first plot of Figure~\ref{fig:filter1}. The second plot of this figure, illustrate the Frobenius distance between the estimator $\rhoe$ and the physical state $\rho$. As one can easily see, one still have the convergence of the quantum filter and the physical system to the desired Fock state (here $\ket{3}\bra{3}$).

Through these simulations, we have considered the same measurement and control parameters as those of Section~\ref{sec:closed-loop}. The system is initialized at the coherent state $\rho_0=\DD_{\sqrt{3}}(\ket{0} \bra{0})$ while the quantum filter is initialized at $\rhoe_0=\frac{1}{\nmax+1}\II_{(\nmax+1)\times (\nmax+1)}$.

\begin{figure}
 \centerline{\includegraphics[width=.6\textwidth]{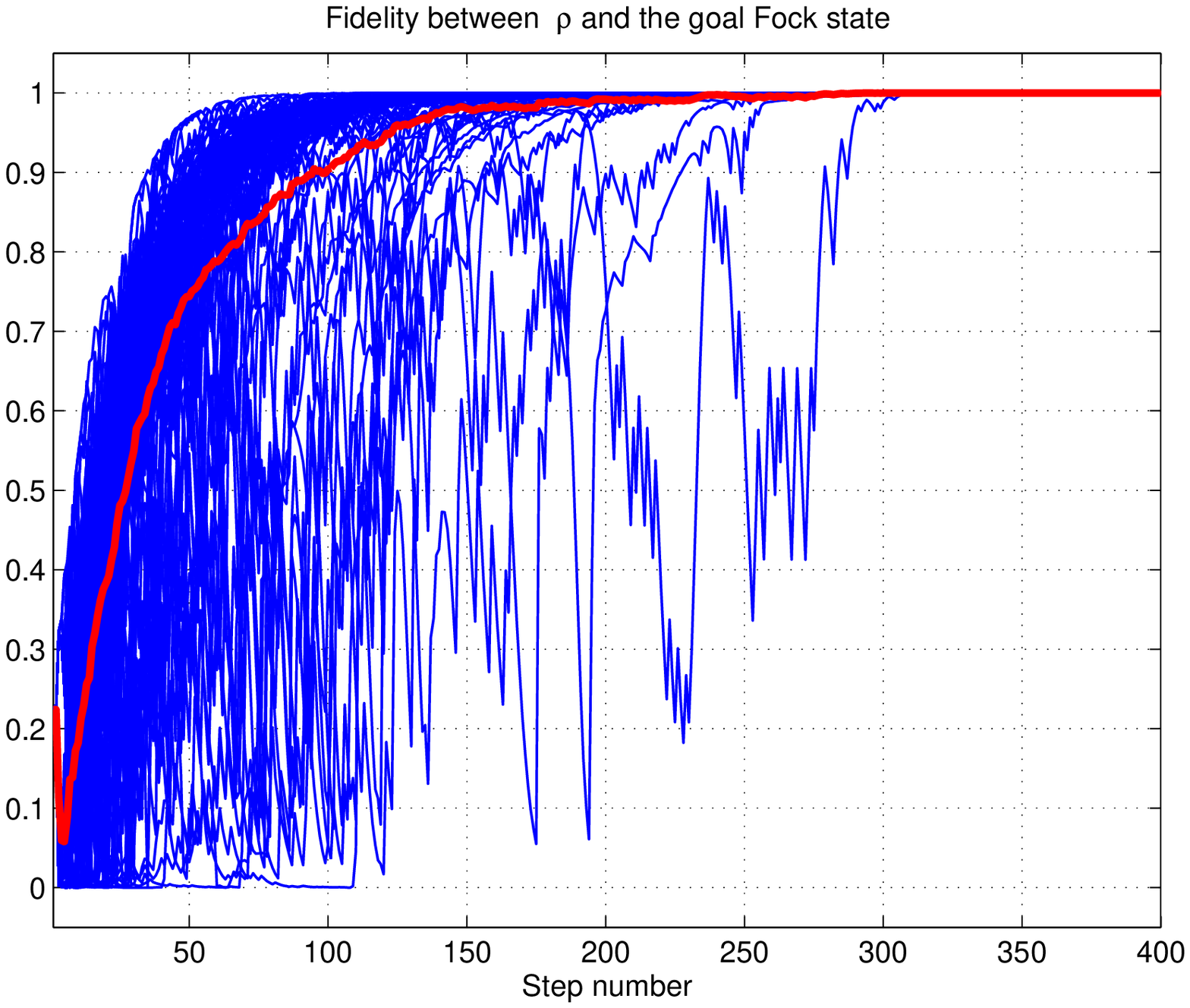}}
 \centerline{\includegraphics[width=.6\textwidth]{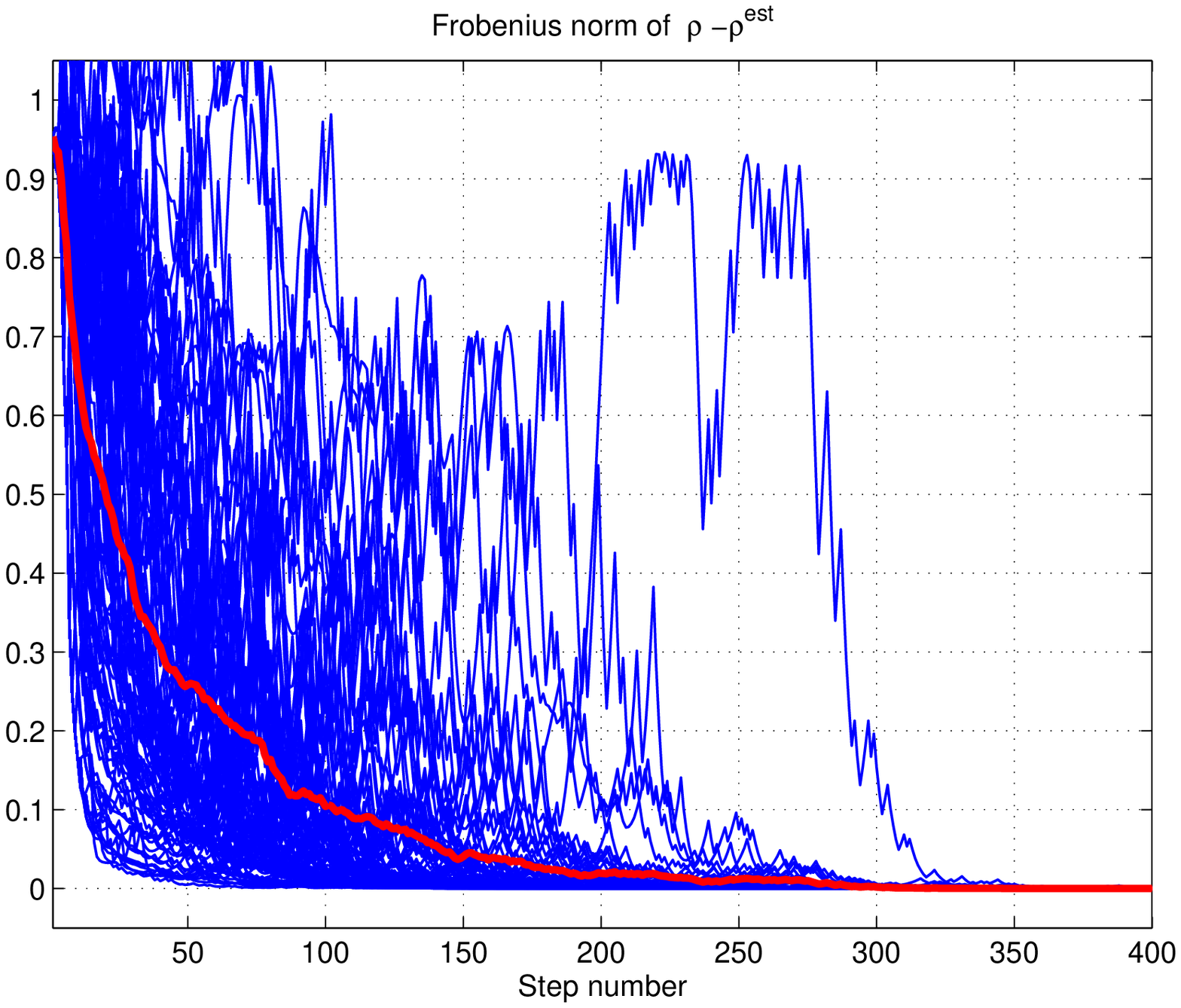}}
 \caption{ (First plot) $\tr{\rho_k\bar\rho}=\bket{3|\rho_k|3}$  versus $k\in\{0, \ldots, 400\}$ for 100 realizations of  the closed-loop Markov process~\eqref{eq:main} with feedback~\eqref{eq:feedfilter} based on the quantum filter~\eqref{eq:filter} starting from the same  state $\chie_0=(\frac{1}{\nmax+1}\II_{(\nmax+1)\times (\nmax+1)},0,\ldots,0)$ with $5$-step delay ($d=5$). The initial state of the physical system $\rho_0$ is given by  $\DD_{\sqrt{3}}(\ket{0} \bra{0})$. The ensemble average over these realizations corresponds to the thick red curve; (Second plot) The Frobenius distance between the estimator $\rhoe$ and $\rho$ ($\sqrt{\tr{(\rho-\rhoe)^2}}$) for 100 realizations. The ensemble average over these realizations corresponds to the thick red curve.}
\label{fig:filter1}
\end{figure}

Through the next subsection, we establish a sort of separation principle implying this semi-global robustness of the closed-loop system with respect to the initial state of the filter equation. Also through the short Subsection~\ref{ssec:filter-rate} we provide a heuristic analysis of the local convergence rate of the filter equation around the target Fock state.

\subsection{A quantum separation principle}\label{ssec:separation}
We consider the joint system-observer dynamics defined for the state $\Xi_k= (\rho_k,\rhoe_k,\beta_{1,k},\ldots,\beta_{d,k})$:
\begin{equation}\label{eq:joint}
  \left\{\begin{array}{rl}
    \rho_{k+1}&=\MM_{s_k}(\DD_{\beta_{d,k}}(\rho_k))
    \\
    \rhoe_{k+1}&=\MM_{s_k}(\DD_{\beta_{d,k}}(\rhoe_k))
    \\
    \beta_{1,k+1}&= \alpha_k
    \\
    \beta_{2,k+1} &= \beta_{1,k}
    \\
    & \vdots
    \\
   \beta_{d,k+1} &= \beta_{d-1,k}.
\end{array}\right.
\end{equation}
We have the following result, a quantum version  of the separation principle  ensuring asymptotic stability of observer/controller from stability of  the observer and of the controller separatly.
\begin{thm}\label{thm:separation}
Consider any closed-loop system of the form~\eqref{eq:joint}, where the feedback law $\alpha_k$ is a function of the quantum filter: $\alpha_k=g(\rhoe_k,\beta_{1,k},\ldots,\beta_{d,k})$. Assume moreover that, whenever $\rhoe_0=\rho_0$ (so that the quantum filter coincides with the closed-loop dynamics~\eqref{eq:chi}), the closed-loop system converges almost surely towards a fixed pure state $\bar \rho$. Then, for any choice of the initial state $\rhoe_0$, such that   $\text{ker}\rhoe_0 \subset \text{ker}\rho_0$, the trajectories of the system  converge almost surely towards the same pure state: $\rho_k\rightarrow \bar\rho$.
\end{thm}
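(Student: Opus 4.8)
The plan is to reduce the statement to a change-of-measure argument on the space of measurement records. The essential observation is that, although the feedback $\alpha_k=g(\rhoe_k,\beta_{1,k},\ldots,\beta_{d,k})$ is injected into the physical system, it depends only on the filter, and $\rhoe_k$ is itself a deterministic function of the past detection outcomes $(s_0,\ldots,s_{k-1})$ and of $\rhoe_0$. Hence the whole control history, and therefore the record operator $\Omega_k=M_{s_{k-1}}D_{\beta_{d,k-1}}\cdots M_{s_0}D_{\beta_{d,0}}$, is a measurable function of the outcome sequence alone. I would then introduce two probability measures on outcome space: $\mathbb{P}^{\rho_0}$, under which the outcomes are generated by the true system started at $\rho_0$, and $\mathbb{P}^{\rhoe_0}$, under which they are generated by a system started at $\rhoe_0$ (in which case, by construction, the filter coincides with the system). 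For a finite record these are $\tr{\Omega_k\rho_0\Omega_k^\dag}$ and $\tr{\Omega_k\rhoe_0\Omega_k^\dag}$ respectively, with the \emph{same} $\Omega_k$ because the control is record-measurable. Under the hypothesis of the theorem (the ideal case applied to the initial state $\rhoe_0$), one has $\rhoe_k\to\bar\rho$ almost surely for $\mathbb{P}^{\rhoe_0}$.

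Next I would exploit the kernel condition. Since $\ker\rhoe_0\subset\ker\rho_0$ means $\mathrm{supp}\,\rho_0\subseteq\mathrm{supp}\,\rhoe_0$, in finite dimension there is a constant $c>0$ with $\rho_0\le c\,\rhoe_0$; conjugating by the invertible $\Omega_k$ and taking traces gives $\tr{\Omega_k\rho_0\Omega_k^\dag}\le c\,\tr{\Omega_k\rhoe_0\Omega_k^\dag}$, so the likelihood ratio $W_k=\tr{\Omega_k\rho_0\Omega_k^\dag}/\tr{\Omega_k\rhoe_0\Omega_k^\dag}$ is bounded above by $c$. Because $M_g,M_e$ are invertible and each $D_\beta$ is unitary, $\Omega_k$ is invertible and both traces are strictly positive for every record; hence $\mathbb{P}^{\rho_0}$ and $\mathbb{P}^{\rhoe_0}$ are equivalent at each finite time, $W_k$ is a bounded $\mathbb{P}^{\rhoe_0}$-martingale, and therefore $\mathbb{P}^{\rho_0}\ll\mathbb{P}^{\rhoe_0}$ on the tail $\sigma$-algebra. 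Consequently the $\mathbb{P}^{\rhoe_0}$-almost-sure convergence $\rhoe_k\to\bar\rho$ transfers to $\mathbb{P}^{\rho_0}$-almost-sure convergence: under the true law, the filter still reaches the target.

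Finally I would transfer the convergence of the filter to the physical state. Conjugating $\rho_0\le c\,\rhoe_0$ by $\Omega_k$ and normalising gives, for every record, $\rho_k\le (c/W_k)\,\rhoe_k$. The key is that $1/W_k$ is a nonnegative $\mathbb{P}^{\rho_0}$-martingale (the density process of $\mathbb{P}^{\rhoe_0}$ with respect to $\mathbb{P}^{\rho_0}$, well defined by the finite-time equivalence above); by Doob's convergence theorem it converges $\mathbb{P}^{\rho_0}$-almost surely to a finite limit, so along almost every trajectory there is a finite random bound $1/W_k\le C(\omega)$. Writing $Q=\II-\bar\rho$, this yields $\tr{Q\rho_k}\le cC(\omega)\tr{Q\rhoe_k}$, and since $\rhoe_k\to\bar\rho$ forces $\tr{Q\rhoe_k}\to0$, we get $\tr{\bar\rho\rho_k}=1-\tr{Q\rho_k}\to1$, i.e. $\rho_k\to\bar\rho$ almost surely. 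I expect the main obstacle to be exactly this last transfer: the domination $\rho_k\le (c/W_k)\,\rhoe_k$ is useless unless the likelihood ratio is controlled along trajectories, and the correct fix is to notice that only a per-trajectory (almost-sure), not uniform, bound on $1/W_k$ is needed, which nonnegative-martingale convergence supplies. The other delicate point is setting up the record-measure formalism cleanly, in particular verifying that the control history is genuinely a function of the outcomes so that $\Omega_k$ is common to both measures.
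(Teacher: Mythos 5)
Your proof is correct, and it takes a genuinely different route from the paper's, although both pivot on the same two ingredients: (i) the control history, hence the unnormalized record operator $\Omega_k$, depends only on the detection record and $\rhoe_0$, and (ii) the kernel hypothesis, which the paper uses in the equivalent form $\rhoe_0=\gamma\rho_0+(1-\gamma)\rho_0^c$ with $\gamma>0$ and $\rho_0^c\in\XX$ (your $\rho_0\le c\,\rhoe_0$ is the same statement with $c=1/\gamma$). The paper stays at the level of expectations: ingredient (i) makes $\EE{\tr{\bar\rho\rho_k}~|~\rho_0,\rhoe_0}$ linear in $\rho_0$, the matched-filter hypothesis plus dominated convergence give $\EE{\tr{\bar\rho\rho_k}~|~\rhoe_0,\rhoe_0}\rightarrow 1$, and since this quantity is the convex combination $\gamma\,\EE{\tr{\bar\rho\rho_k}~|~\rho_0,\rhoe_0}+(1-\gamma)\,\EE{\tr{\bar\rho\rho_k}~|~\rho_0^c,\rhoe_0}$ of two quantities bounded by $1$, each term must tend to $1$. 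You instead recast (i) as the statement that the two record laws $\mathbb{P}^{\rho_0}$ and $\mathbb{P}^{\rhoe_0}$ share the same $\Omega_k$, and run the comparison pathwise through the likelihood ratio $W_k$; note that your bounded martingale is the paper's linearity in disguise, since $\EE{W_k\mathbf{1}_A}$ under $\mathbb{P}^{\rhoe_0}$ is exactly $\mathbb{P}^{\rho_0}(A)$. What each buys: the paper's squeeze is shorter and needs no change-of-measure formalism, but as written it only yields $\EE{\tr{\bar\rho\rho_k}}\rightarrow 1$, i.e.\ convergence in probability --- for $[0,1]$-valued variables this does not by itself imply almost sure convergence (independent Bernoulli variables with success probability $1-1/k$ give a counterexample), so the paper's final sentence glosses a step. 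Your argument closes precisely that gap: the uniform bound $W_k\le c$ gives uniform integrability, hence absolute continuity of $\mathbb{P}^{\rho_0}$ with respect to $\mathbb{P}^{\rhoe_0}$ on $\mathcal{F}_\infty=\sigma\left(\cup_k\mathcal{F}_k\right)$ --- which is where the event $\{\rhoe_k\rightarrow\bar\rho\}$ lives, and is slightly more than the tail $\sigma$-algebra you name --- so the filter converges $\mathbb{P}^{\rho_0}$-a.s., and the pathwise domination $\rho_k\le(c/W_k)\,\rhoe_k$ together with $\sup_k 1/W_k<\infty$ a.s.\ (nonnegative-martingale convergence, legitimate because the finite-time measures are equivalent thanks to the invertibility of $M_g$, $M_e$ and the unitarity of the displacements) transfers this to $\rho_k\rightarrow\bar\rho$ almost surely. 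In short: same skeleton and same use of the kernel condition, but your pathwise version is the more careful one on the almost-sure claim, at the modest cost of setting up the record-space formalism.
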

\begin{rem}\label{rem:ker}
One only needs to choose $\rhoe_0=\frac{1}{\nmax+1}\II_{(\nmax+1)\times (\nmax+1)}$, so that the assumption $\text{ker}\rhoe_0 \subset \text{ker}\rho_0$ is satisfied for any $\rho_0$.
\end{rem}

\begin{proof}
The basic idea  is based on the fact that $\EE{\tr{\rho_k \bar\rho}~|~\rho_0,\rhoe_0}$ (where we take the expectation over all jump realizations) depends linearly on $\rho_0$ even though we are applying a feedback control. Indeed, the feedback law $\alpha_k$ depends only on the historic of the quantum jumps as well as the initialization of the quantum filter $\rhoe_0$. Therefore, we can write
$$
\beta_{k,d}=\alpha_{k-d}=\alpha(\rhoe_0,s_0,\ldots,s_{k-d-1}),
$$
where $\{s_j\}_{j=0}^{k-1}$ denotes the sequence of $k$ first jumps. Finally, through simple computations, we have
$$
\EE{\tr{\rho_k \bar\rho}~|~\rho_0,\rhoe_0}= \sum_{s_0,\ldots,s_{k-1}}  \widetilde \MM_{s_{k-1}}\circ \DD_{\beta_{k,d}}\circ \ldots\circ  \widetilde \MM_{s_0}\circ \DD_{\beta_{0,d}}\rho_0,
$$
where
$$
\widetilde \MM_{s} \rho= M_s\rho M_s^\dag.
$$
So, we easily have the linearity of $\EE{\tr{\rho_k \bar\rho}~|~\rho_0,\rhoe_0}$ with respect to $\rho_0$.

At this point, we apply the assumption $\text{ker}\rhoe_0 \subset \text{ker}\rho_0$ and therefore, one can find a constant $\gamma>0$ and a well-defined density matrix $\rho_0^c$ in $\XX$, such that
$$
\rhoe_0=\gamma\rho_0+(1-\gamma)\rho_0^c.
$$
Now, considering the system~\eqref{eq:joint} initialized at the state $(\rhoe_0,\rhoe_0,0,\ldots,0)$, we have by the assumptions of the theorem and applying  dominated convergence theorem:
$$
\lim_{k\rightarrow\infty} \EE{\tr{\rho_k \bar\rho}~|~\rhoe_0,\rhoe_0}=1.
$$
By the linearity of $ \EE{\tr{\rho_k \bar\rho}~|~\rho_0,\rhoe_0}$  with respect to $\rho_0$, we have
$$
\EE{\tr{\rho_k \bar\rho}~|~\rhoe_0,\rhoe_0}=\gamma \EE{\tr{\rho_k \bar\rho}~|~\rho_0,\rhoe_0}+(1-\gamma)\EE{\tr{\rho_k \bar\rho}~|~\rho_0^c,\rhoe_0},
$$
and as both $ \EE{\tr{\rho_k \bar\rho}~|~\rho_0,\rhoe_0}$ and $ \EE{\tr{\rho_k \bar\rho}~|~\rho^c_0,\rhoe_0}$ are less than or equal to one, we necessarily have that both of them converge to 1:
$$
\lim_{k\rightarrow\infty} \EE{\tr{\rho_k \bar\rho}~|~\rho_0,\rhoe_0}=1.
$$
This implies the almost sure convergence of the physical system towards the pure state $\bar\rho$.
\end{proof}

\subsection{Local convergence rate for the quantum filter}\label{ssec:filter-rate}
Let us  linearize the system-observer dynamics~\eqref{eq:joint} around the equilibrium state \\$\bar\Xi=(\bar\rho,\bar\rho,0,\ldots,0)$.  Set $\Xi=\bar\Xi+\delta\Xi$ with
$\delta\Xi=(\delta\rho, \delta\rhoe, \delta\beta_1, \ldots,\delta \beta_d )$ small, $\delta \rho$ and $\delta\rhoe$ Hermitian and of trace 0. We have the following dynamics for the linearized system (adaptation of~\eqref{eq:closedLoopLin}):
{\small
\begin{equation}\label{eq:filterLin}
\begin{array}{rl}
  \delta\rho_{k+1}&= A_{s_k}
      \left( \delta\rho_k+ \delta\beta_{d,k} [\aaa^\dag,\bar\rho] - \delta\beta_{d,k}^* [\aaa,\bar\rho] \right)
         A_{s_k}^\dag   -  \tr{A_{s_k}\delta\rho_k A_{s_k}^\dag}\bar\rho
\\
  \delta\rhoe_{k+1}&= A_{s_k}
      \left( \delta\rhoe_k+ \delta\beta_{d,k} [\aaa^\dag,\bar\rho] - \delta\beta_{d,k}^* [\aaa,\bar\rho] \right)
         A_{s_k}^\dag   -  \tr{A_{s_k}\delta\rhoe_k A_{s_k}^\dag}\bar\rho
\\
  \delta \beta_{1,k+1} &=
  -\epsilon (2\bar n +1) \left(\sum_{j=1}^{d} \cos^j\!{\vartheta} ~\delta\beta_{j,k}\right)
  + \epsilon \cos^d\!{\vartheta} ~\tr{\delta \rhoe_k [\aaa,\bar\rho]}
\\
    \delta\beta_{2,k+1} &= \delta\beta_{1,k}
    \\
    & \vdots
    \\
   \delta\beta_{d,k+1} &= \delta\beta_{d-1,k}
\end{array}
\end{equation}}
where $s_k\in\{g,e\}$, the random matrices $A_{s_k}$ are given by $A_g=\frac{M_g}{\cos\varphi_{\bar n}}$ with probability $P_g=\cos^2\!\varphi_{\bar n}$ and $A_e=\frac{M_e}{\sin\varphi_{\bar n}}$ with probability $P_e=\sin^2\!\varphi_{\bar n}$.

At this point, we note that by considering $\widetilde{\delta\rho}_k= \delta \rhoe_k-\delta\rho_k$,  we have the following simple dynamics:
$$
\widetilde{\delta\rho}_{k+1}=A_{s_k}\widetilde{\delta\rho}_k A_{s_k}^\dag -\tr{A_{s_k}\widetilde{\delta\rho}_k A_{s_k}^\dag}\bar\rho.
$$
Indeed, as the same control laws are applied to the quantum filter and the physical system, the difference between $\delta \rhoe_k$ and $\delta \rho_k$ follows the same dynamics as the linearized  open-loop system~\eqref{eq:OpenLoopLin}. But, we know by the proposition~\ref{prop:OpenLoopLin} that this linear system admits  strictly negative Lyapunov exponents. This triangular structure,  together with the convergence rate analysis of the closed-loop system in proposition~\ref{prop:ClosedLoopLin},   yields the following proposition whose detailed proof is left to the reader:
\begin{prop} \label{prop:filterlin}
Consider  the linear Markov chain~\eqref{eq:filterLin}. For  small enough $\epsilon>0$, its largest Lyapunov exponent is strictly negative.
\end{prop}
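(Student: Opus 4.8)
The plan is to exploit the cascade (block-triangular) structure of~\eqref{eq:filterLin} that is already visible in the change of variable $\widetilde{\delta\rho}_k=\delta\rhoe_k-\delta\rho_k$ introduced above. In the coordinates $(\delta\rho_k,\widetilde{\delta\rho}_k,\delta\beta_{1,k},\ldots,\delta\beta_{d,k})$ the error block $\widetilde{\delta\rho}_k$ evolves autonomously through the open-loop linearized recursion~\eqref{eq:OpenLoopLin}, whose scalar entries obey~\eqref{eq:deltarho}, while the block $(\delta\rho_k,\delta\beta_{1,k},\ldots,\delta\beta_{d,k})$ obeys exactly the closed-loop linearized dynamics~\eqref{eq:closedLoopLin}, the only new feature being that the substitution $\delta\rhoe_k=\delta\rho_k+\widetilde{\delta\rho}_k$ adds to the $\delta\beta_{1,k+1}$ update the forcing term $\epsilon\cos^d\!{\vartheta}\,\tr{\widetilde{\delta\rho}_k[\aaa,\bar\rho]}$. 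A crucial simplification is that, since $[\aaa,\bar\rho]=\sqrt{\bar n}\ket{\bar n-1}\bra{\bar n}-\sqrt{\bar n+1}\ket{\bar n}\bra{\bar n+1}$, this forcing depends on $\widetilde{\delta\rho}_k$ only through the two off-diagonal entries $\tilde x_k=\bket{\bar n|\widetilde{\delta\rho}_k|\bar n-1}$ and $\tilde y_k=\bket{\bar n+1|\widetilde{\delta\rho}_k|\bar n}$, each of which satisfies the scalar recursion $\tilde x_{k+1}=a_{s_k}\tilde x_k$, $\tilde y_{k+1}=b_{s_k}\tilde y_k$ with the very multipliers $a_{s_k},b_{s_k}$ appearing in~\eqref{eq:xyz}.

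First I would split the full linearized chain into decoupled pieces. All entries $\widetilde{\delta\rho}_k^{n_1,n_2}$ with $(n_1,n_2)\neq(\bar n,\bar n)$, together with the entries $\delta\rho_k^{n_1,n_2}$ lying outside the coupled set $\{(\bar n\pm1,\bar n),(\bar n,\bar n\pm1)\}$, form autonomous scalar recursions of the form~\eqref{eq:deltarho}; by Proposition~\ref{prop:OpenLoopLin} (and the computation of $\Lambda_0$ following~\eqref{eq:closedLoopLin}) each has a strictly negative Lyapunov exponent. Since none of these components feeds back into the control, they do not influence the exponent of the remaining block. The genuinely coupled part is the system~\eqref{eq:xyz} for $X_k=(x_k,y_k,z_{1,k},\ldots,z_{d,k})$, now driven through $z_{1,k+1}$ by the additional term $\epsilon\cos^d\!{\vartheta}\,(\sqrt{\bar n}\,\tilde x_k-\sqrt{\bar n+1}\,\tilde y_k)$, and it must be analysed jointly with the autonomous scalars $(\tilde x_k,\tilde y_k)$.

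For this coupled block I would reuse the explicit super-martingale norm $V$ built in the proof of Proposition~\ref{prop:ClosedLoopLin} and augment it: writing $Y_k=(X_k,\tilde x_k,\tilde y_k)$, set $W(Y)=V(X)+\kappa(|\tilde x|+|\tilde y|)$ for a weight $\kappa>0$ to be fixed. The key quantitative input is that $|\tilde x_k|$ and $|\tilde y_k|$ are strict one-step super-martingales: a direct computation gives $\EE{|\tilde x_{k+1}|~|~\tilde x_k}=\tau_x\,|\tilde x_k|$ with $\tau_x=|\cos\varphi_{\bar n}\cos\varphi_{\bar n-1}|+|\sin\varphi_{\bar n}\sin\varphi_{\bar n-1}|$, and Cauchy--Schwarz yields $\tau_x<1$ strictly (because $\varphi_{\bar n-1}\neq\varphi_{\bar n}$ by non-degeneracy), and similarly $\tau_y<1$. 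Carrying the new forcing through the estimate of Proposition~\ref{prop:ClosedLoopLin} gives $\EE{V(X_{k+1})~|~Y_k}\leq\sigma(1+2\epsilon(\bar n+1))\,V(X_k)+\mu\epsilon\sigma^d(\sqrt{\bar n}\,|\tilde x_k|+\sqrt{\bar n+1}\,|\tilde y_k|)$, so that $\EE{W(Y_{k+1})~|~Y_k}\leq\sigma(1+2\epsilon(\bar n+1))V(X_k)+(\mu\epsilon\sigma^d\sqrt{\bar n}+\kappa\tau_x)|\tilde x_k|+(\mu\epsilon\sigma^d\sqrt{\bar n+1}+\kappa\tau_y)|\tilde y_k|$. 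Choosing $\lambda\in(\max(\sigma(1+2\epsilon(\bar n+1)),\tau_x,\tau_y),1)$ and then $\kappa$ large enough yields $\EE{W(Y_{k+1})~|~Y_k}\leq\lambda\,W(Y_k)$, so $W\to0$ exponentially almost surely and the coupled block has a strictly negative Lyapunov exponent; together with the decoupled pieces this proves the claim.

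The main obstacle is essentially bookkeeping rather than a new idea, but it conceals one genuine subtlety worth flagging: a naive one-step super-martingale in the $1$-norm of $\widetilde{\delta\rho}$ would fail, because the diagonal entries $\widetilde{\delta\rho}_k^{n,n}$ ($n\neq\bar n$) have expected modulus-multiplier exactly $1$ (they are martingales in modulus, decaying only in the Lyapunov, i.e. $\EE{\log}$, sense). The argument goes through precisely because these ill-behaved entries never enter the feedback: the off-diagonal support of $[\aaa,\bar\rho]$ guarantees that the forcing sees solely $\tilde x,\tilde y$, for which the expected modulus does contract geometrically at a deterministic rate. Verifying this separation of the well-behaved from the ill-behaved error components, and then tuning $(\epsilon,\kappa,\lambda)$ so that $W$ is a genuine super-martingale, is where the care is required; the remaining decoupled exponents are immediate from Propositions~\ref{prop:OpenLoopLin} and~\ref{prop:ClosedLoopLin}.
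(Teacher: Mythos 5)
Your proof is correct, but it takes a more laborious route than the paper, and the comparison is instructive. The paper's argument (stated as a sketch, with details left to the reader) hinges on an observation you passed over: in~\eqref{eq:filterLin} the control update is driven by $\delta\rhoe_k$, not $\delta\rho_k$, so the block $(\delta\rhoe_k,\delta\beta_{1,k},\ldots,\delta\beta_{d,k})$ is itself autonomous and is \emph{verbatim} the closed-loop linearization~\eqref{eq:closedLoopLin} with $\delta\rhoe$ in place of $\delta\rho$. Hence in the coordinates $(\delta\rhoe,\delta\beta,\widetilde{\delta\rho})$ the chain splits into two structurally decoupled subsystems (driven by the same jumps $s_k$ but with no cross terms), whose exponents are strictly negative by Propositions~\ref{prop:ClosedLoopLin} and~\ref{prop:OpenLoopLin} respectively, and the physical state is recovered as $\delta\rho_k=\delta\rhoe_k-\widetilde{\delta\rho}_k$, so its exponent is at most the maximum of the two --- no forcing analysis is needed at all. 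By instead keeping $(\delta\rho,\delta\beta)$ as the controlled block you created a genuine cascade with the forcing $\epsilon\cos^d\!\vartheta~\tr{\widetilde{\delta\rho}_k[\aaa,\bar\rho]}$, and your augmented supermartingale $W=V+\kappa(|\tilde x|+|\tilde y|)$ handles it correctly: the identification of $\tilde x_k,\tilde y_k$ as the only error entries entering the feedback is right (the trace against $[\aaa,\bar\rho]$ sees only the entries $(\bar n,\bar n-1)$ and $(\bar n+1,\bar n)$), the one-step contraction $\tau_x=|\cos\varphi_{\bar n}\cos\varphi_{\bar n-1}|+|\sin\varphi_{\bar n}\sin\varphi_{\bar n-1}|<1$ via Cauchy--Schwarz and the non-degeneracy of the spectra of $M_g^2$, $M_e^2$ is sound (note $\tau_x$ need not equal $\sigma=|\cos\vartheta|$ when sign products differ, but it stays below $1$, which is all you use), and the tuning of $\lambda$ and $\kappa$ closes the estimate at the same level of rigor as the paper's own proof of Proposition~\ref{prop:ClosedLoopLin}. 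What your version buys is precisely what the paper leaves to the reader: an explicit, quantitative justification that the cascade coupling cannot destroy the exponent --- in general, transferring Lyapunov exponents through block-triangular random systems requires an argument --- together with the genuinely valuable remark that the diagonal error entries are only modulus-martingales, since $\EE{|a^{n,n}_{s_k}|}=\cos^2\!\varphi_n+\sin^2\!\varphi_n=1$, so a naive one-norm supermartingale on the full error block would fail; this subtlety silently disappears in the paper's coordinates but your decomposition must, and does, confront it.
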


\section{Conclusion}\label{sec:conclusion}
We have analyzed  a measurement-based feedback control allowing to stabilize globally and deterministically a desired Fock state. In this feedback design, we have taken into account the important delay between the measurement process and the feedback injection. This delay has been compensated by a stochastic version of a Smith predictor in the quantum filtering equation.

In fact, the measurement process of the experimental setup~\cite{deleglise-et-al:nature08} admits some other imperfections. These imperfections can, essentially, be resumed to the following ones: 1- the atom-detector is not fully efficient and it can miss some of the atoms (about 20\%); 2- the atom-detector is not fault-free and the result of the measurement (atom in the state $g$ or $e$) can be inter-changed (a fault rate of about 10\%); 3- the atom preparation process is itself a stochastic process following a Poisson law and therefore the measurement pulses can be empty of atom (a pulse occupation rate of about 40\%). The knowledge of all these rates can help us to adapt the quantum filter by taking into account these imperfections. This has been done in~\cite{dotsenko-et-al:PRA09}, by considering the Bayesian law and providing   numerical evidence of the efficiency of such feedback algorithms assuming all these imperfections.

\bibliographystyle{plain}

\appendix

\section{Stability theory for stochastic processes}\label{append:stoch-stab}

We recall here the Doob's first martingale convergence theorem, the Doob's inequality and  the Kushner's invariance theorem. For detailed discussions and proofs we refer to~\cite{Liptser-Shiryayev}(Chapter 2) and ~\cite{kushner-71} (Sections 8.4 and 8.5).

The following theorem characterizes the convergence of bounded martingales:
\begin{thm}[Doob's first martingale convergence theorem]\label{thm:conv_martingale1}
Let $\{X_n\}$ be a Markov chain on state space $\XX$ and suppose that
$$
\EE{X_n}\geq \EE{X_m},\qquad\text{for }n\geq m,
$$
this is $X_n$ is a submartingale. Assume furthermore that ($x^+$ is the positive part of $x$)
$$
\sup_n \EE{X_n^+}<\infty.
$$
Then $\lim_n X_n$ ($=X_\infty$) exists with probability $1$, and $\EE{X_\infty^+} <\infty$.
\end{thm}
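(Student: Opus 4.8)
The plan is to establish this by the classical route for $L^1$-bounded submartingales, namely via Doob's upcrossing inequality; the Markov-chain and state-space framing is inessential here, so I treat $\{X_n\}$ as a real-valued submartingale with respect to its natural filtration. First I would record an elementary reduction: for a submartingale $\EE{X_n}$ is non-decreasing in $n$, so $\EE{|X_n|}=2\EE{X_n^+}-\EE{X_n}\le 2\EE{X_n^+}-\EE{X_0}$, whence the hypothesis $\sup_n\EE{X_n^+}<\infty$ already gives full $L^1$-boundedness $\sup_n\EE{|X_n|}<\infty$. This bound will be used twice below.

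Next, for fixed rationals $a<b$, let $U_N([a,b])$ denote the number of upcrossings of $[a,b]$ by $X_0,\ldots,X_N$. The heart of the argument is Doob's upcrossing inequality $(b-a)\,\EE{U_N([a,b])}\le \EE{(X_N-a)^+}$. I would prove it by passing to the non-negative submartingale $Y_n=(X_n-a)^+$ (the map $x\mapsto(x-a)^+$ being convex and non-decreasing) and applying the ``buy-low/sell-high'' martingale transform: one defines a predictable $\{0,1\}$-valued process $C_n$ equal to $1$ exactly while an upcrossing is in progress (switched on when $X$ falls to or below $a$, off when it rises to or above $b$). Since $C$ and $1-C$ are non-negative, bounded and predictable, both discrete integrals $(C\cdot Y)$ and $((1-C)\cdot Y)$ are submartingales starting at $0$; combining the completed-upcrossing lower bound $(C\cdot Y)_N\ge (b-a)\,U_N([a,b])$ with $\EE{(C\cdot Y)_N}\le \EE{Y_N}-\EE{Y_0}\le\EE{Y_N}$ yields the inequality. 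This predictability-and-boundedness bookkeeping in the martingale transform is where the real work lies and is the main obstacle; everything after it is routine.

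From the upcrossing inequality together with the $L^1$ bound, $\sup_N\EE{U_N([a,b])}\le (b-a)^{-1}\sup_N\EE{(X_N-a)^+}<\infty$. Because $U_N([a,b])$ increases in $N$ to a limit $U_\infty([a,b])$, monotone convergence gives $\EE{U_\infty([a,b])}<\infty$, so $U_\infty([a,b])<\infty$ almost surely. Taking the countable union over all rational pairs $a<b$, the event $\{\liminf_n X_n<a<b<\limsup_n X_n \ \text{for some rationals}\ a<b\}$ is null, so on its complement $\liminf_n X_n=\limsup_n X_n$; that is, $X_\infty:=\lim_n X_n$ exists in $[-\infty,+\infty]$ with probability $1$.

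Finally, to see that the limit is genuinely finite and to obtain the integrability claim, I would invoke Fatou's lemma. Since $|X_n|\ge 0$ and $\liminf_n|X_n|=|X_\infty|$ almost surely, $\EE{|X_\infty|}\le\liminf_n\EE{|X_n|}\le\sup_n\EE{|X_n|}<\infty$, so $|X_\infty|<\infty$ almost surely and the limit is finite; applying Fatou instead to $X_n^+$ gives $\EE{X_\infty^+}\le\liminf_n\EE{X_n^+}\le\sup_n\EE{X_n^+}<\infty$, which is the stated bound. This completes the proof.
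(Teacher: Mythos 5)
Your proof is correct. Note, however, that the paper itself gives no proof of this statement: Theorem~\ref{thm:conv_martingale1} appears in Appendix~\ref{append:stoch-stab} as a recalled classical result, with the reader referred to Liptser--Shiryayev (Chapter~2) and Kushner for details. Your upcrossing argument is precisely the canonical proof found in those references: the reduction $\EE{|X_n|}=2\EE{X_n^+}-\EE{X_n}\le 2\EE{X_n^+}-\EE{X_0}$ to $L^1$-boundedness, Doob's upcrossing inequality via the predictable buy-low/sell-high transform applied to $Y_n=(X_n-a)^+$, the countable union over rational pairs $a<b$ to rule out oscillation, and Fatou to get finiteness of the limit and $\EE{X_\infty^+}<\infty$. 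All steps check out, including the subtler ones: the transform bookkeeping (both $(C\cdot Y)$ and $((1-C)\cdot Y)$ are submartingales since $C$ is predictable, bounded and non-negative, and the incomplete final upcrossing contributes non-negatively because $Y=0$ at the start of each upcrossing) and the monotone-convergence passage from $\sup_N\EE{U_N([a,b])}<\infty$ to $U_\infty([a,b])<\infty$ almost surely. One point worth making explicit: as stated in the paper, the hypothesis $\EE{X_n}\geq\EE{X_m}$ for $n\geq m$ is only a consequence of the submartingale property, not its definition, and monotone expectations alone would not suffice for the theorem; you correctly repaired this by working with the genuine conditional-expectation submartingale property, which is indeed how the theorem is invoked in the body of the paper (for the martingale $\bra{n}\rho_k\ket{n}$ and the sub-martingale $\tr{\bar\rho\,\rhop(\chi_k)}$).
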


Now, we recall two results that are often referred as the stochastic versions of the Lyapunov stability theory and the LaSalle's invariance principle.

\begin{thm}[Doob's Inequality]\label{thm:doob}
Let $\{X_n\}$ be a Markov chain on state space $\XX$. Suppose that there is a non-negative function $V(x)$ satisfying
$
\EE{V(X_1)~|~X_0=x}-V(x)=-k(x),
$
where $k(x)\geq 0$ on the set $\{s:V(x)<\lambda\}\equiv Q_\lambda$. Then
$$
\PP{\underset{\infty>n\geq 0}{\sup} V(X_n)\geq \lambda~|~X_0=x}\leq \frac{V(x)}{\lambda}.
$$
\end{thm}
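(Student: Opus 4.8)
The plan is to convert the one-step drift hypothesis into a maximal inequality by the classical stopping-time construction for nonnegative supermartingales. First I would introduce the first-exit time from $Q_\lambda$,
$$
\tau = \inf\{n\geq 0~|~V(X_n)\geq \lambda\},
$$
with the convention $\tau=+\infty$ when $V(X_n)<\lambda$ for all $n$, and I would work with the stopped process $Y_n=V(X_{n\wedge\tau})$ together with the natural filtration $\mathcal F_n=\sigma(X_0,\ldots,X_n)$. The point of stopping at $\tau$ is that the drift condition is only assumed on $Q_\lambda$, so one must prevent the increments from ``seeing'' points where $V\geq\lambda$; freezing the trajectory at its first visit to the complement of $Q_\lambda$ does exactly this.

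The central step is to verify that $\{Y_n\}$ is a nonnegative supermartingale. On the event $\{\tau\leq n\}$ the stopped process is frozen, $Y_{n+1}=Y_n=V(X_\tau)$, so the supermartingale inequality holds with equality. On the complementary event $\{\tau>n\}$ one has $X_m\in Q_\lambda$ for every $m\leq n$; in particular $X_n\in Q_\lambda$, and the Markov property together with the drift hypothesis gives
$$
\EE{Y_{n+1}~|~\mathcal F_n} = \EE{V(X_{n+1})~|~X_n} = V(X_n)-k(X_n)\leq V(X_n)=Y_n,
$$
where the inequality uses precisely that $k\geq 0$ on $Q_\lambda$. Combining the two cases yields $\EE{Y_{n+1}~|~\mathcal F_n}\leq Y_n$, and $Y_n\geq 0$ because $V\geq 0$.

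From the supermartingale property I would deduce $\EE{Y_n~|~X_0=x}\leq Y_0=V(x)$ for every $n$. To extract the tail bound, restrict to $\{\tau\leq n\}$, on which $Y_n=V(X_\tau)\geq\lambda$ by definition of $\tau$; discarding the complementary contribution by nonnegativity of $Y_n$,
$$
\lambda\,\PP{\tau\leq n~|~X_0=x}\leq \EE{Y_n\,\mathbf 1_{\{\tau\leq n\}}~|~X_0=x}\leq \EE{Y_n~|~X_0=x}\leq V(x),
$$
so $\PP{\tau\leq n~|~X_0=x}\leq V(x)/\lambda$ uniformly in $n$. Finally I would let $n\to\infty$: the events $\{\tau\leq n\}=\{\max_{0\leq m\leq n}V(X_m)\geq\lambda\}$ increase to $\{\tau<\infty\}$, and continuity from below of the probability measure transfers the bound to the limit. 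The event $\{\sup_{n\geq 0}V(X_n)\geq\lambda\}$ coincides with $\{\tau<\infty\}$ except possibly on the boundary set where the supremum equals $\lambda$ without being attained; this residual set is absorbed by applying the same argument with the first-exit time of $\{V\geq\lambda'\}$ for $\lambda'<\lambda$ and letting $\lambda'\uparrow\lambda$, which yields the stated bound $V(x)/\lambda$.

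The main obstacle is the verification of the supermartingale property for the \emph{stopped} process rather than for $V(X_n)$ itself: because the drift hypothesis is local to $Q_\lambda$, the role of $\tau$ is to confine every relevant one-step increment to $Q_\lambda$ while guaranteeing that the frozen part contributes no increase. Once this is in place, the remaining ingredients — the Markov reduction of the conditional expectation, the optional bound $\EE{Y_n~|~X_0=x}\leq V(x)$, and the monotone passage $n\to\infty$ — are routine.
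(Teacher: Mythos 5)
The paper does not actually prove this statement---it is recalled in Appendix~A from Kushner's book, to which the reader is referred---and your stopped-supermartingale argument is precisely the classical proof found there: freeze the chain at its first entrance into $\{V\geq\lambda\}$, verify the supermartingale property using the drift hypothesis only on $Q_\lambda$, apply Markov's inequality to the stopped process, and pass to the limit in $n$. Your proof is correct, and the final refinement handling the case where the supremum equals $\lambda$ without being attained (running the argument at level $\lambda'<\lambda$, which is legitimate since $Q_{\lambda'}\subset Q_\lambda$, and letting $\lambda'\uparrow\lambda$) is a scruple most textbook treatments silently skip.
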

For the statement of the second theorem, we need to use the language of probability measures rather than the random processes.
Therefore, we deal with the space $\mathcal M$ of probability measures on the state space $\XX$. Let $\mu_0=\sigma$ be the initial probability distribution (everywhere through this paper we have dealt with the case where $\mu_0$ is a Dirac on a state $\rho_0$ of the state space of density matrices). Then, the probability distribution of $X_n$, given initial distribution $\sigma$, is to be denoted by $\mu_n(\sigma)$. Note that for $m\geq 0$, the Markov property implies:
$
\mu_{n+m}(\sigma)=\mu_n(\mu_m(\sigma)).
$
\begin{thm}[Kushner's invariance theorem]\label{thm:kushner}
Consider the same assumptions as that of the theorem~\ref{thm:doob}. Let $\mu_0=\sigma$ be concentrated on a state $x_0\in Q_\lambda$  ($Q_\lambda$ being defined as in theorem~\ref{thm:doob}), i.e. $\sigma(x_0)=1$.  Assume that $0\leq  k(X_n)\rightarrow 0$ in $Q_\lambda$ implies that $X_n\rightarrow \{x~|~k(x)=0\}\cap Q_\lambda\equiv K_\lambda$. Under the conditions of theorem~\ref{thm:doob}, for trajectories never leaving $Q_\lambda$, $X_n$ converges to $K_\lambda$ almost surely. Also, the associated conditioned probability measures $\tilde\mu_n$ tend to the largest invariant set of measures ${\mathcal M}_\infty\subset {\mathcal M}$ whose support set is in $K_\lambda$. Finally, for the trajectories never leaving $Q_\lambda$, $X_n$ converges, in probability, to the support set of ${\mathcal M}_\infty$.
\end{thm}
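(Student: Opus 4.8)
The plan is to prove the almost-sure statement with nonnegative-supermartingale convergence, and the statement about invariant sets of measures with a weak-compactness argument on the space $\M$ of probability measures. Throughout let $\tau=\inf\{n\ge 0~|~X_n\notin Q_\lambda\}$ denote the first exit time from $Q_\lambda$ and work with the stopped process $X_{n\wedge\tau}$; the trajectories of interest are those on the event $\{\tau=\infty\}$.

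First I would show that $V(X_{n\wedge\tau})$ is a nonnegative supermartingale. For $n<\tau$ the state $X_n$ lies in $Q_\lambda$, so the drift identity $\EE{V(X_{n+1})~|~X_n}-V(X_n)=-k(X_n)$ together with $k\ge 0$ on $Q_\lambda$ gives $\EE{V(X_{(n+1)\wedge\tau})~|~X_n}\le V(X_{n\wedge\tau})$, and for $n\ge\tau$ the stopped process is constant. Applying Theorem~\ref{thm:conv_martingale1} to the submartingale $-V(X_{n\wedge\tau})\le 0$ (for which $\sup_n\EE{(-V)^+}=0$) shows that $V(X_{n\wedge\tau})$ converges almost surely. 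Telescoping the drift identity up to the stopping time gives $\sum_{n\ge 0}\EE{k(X_n)\mathbf 1_{n<\tau}}=V(x_0)-\lim_N\EE{V(X_{N\wedge\tau})}\le V(x_0)<\lambda$, so that $\sum_n k(X_n)\mathbf 1_{n<\tau}<\infty$ almost surely. On $\{\tau=\infty\}$ this forces $k(X_n)\to 0$ almost surely, and the standing hypothesis that $0\le k(X_n)\to 0$ in $Q_\lambda$ implies $X_n\to K_\lambda$ yields the first conclusion: $X_n\to K_\lambda$ almost surely for trajectories never leaving $Q_\lambda$.

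Next I would treat the conditioned measures $\tilde\mu_n$, the law of $X_n$ conditioned on $\{\tau>n\}$ (trajectories that have not yet left $Q_\lambda$). On the compact state space $\{\tilde\mu_n\}$ is tight, so by Prokhorov's theorem it has weak limit points, which I would characterize in two steps. (i) Every weak limit is supported in $K_\lambda$: since $\EE{k(X_n)~|~\tau>n}\to 0$ and $k\ge 0$ is continuous, each limit measure assigns full mass to $\{k=0\}\cap Q_\lambda=K_\lambda$. (ii) The $\omega$-limit set of $\{\tilde\mu_n\}$ is invariant: the Markov property gives a recursion $\tilde\mu_{n+1}=P^\ast\tilde\mu_n$ with $P^\ast$ the one-step transition on measures, and its Feller (weak-continuity) property shows that if $\tilde\mu_{n_j}\to\mu_\ast$ then $\tilde\mu_{n_j+1}\to P^\ast\mu_\ast$, so this set is closed under $P^\ast$. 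Consequently the $\omega$-limit set is an invariant set of measures supported in $K_\lambda$, hence contained in the largest such set $\M_\infty$, which is the assertion that $\tilde\mu_n$ tends to $\M_\infty$. The in-probability convergence of $X_n$ to $\mathrm{supp}\,\M_\infty$ then follows: if along some subsequence $X_n$ kept mass $\delta>0$ at distance $\ge\varepsilon$ from $\mathrm{supp}\,\M_\infty$, a further weakly convergent subsequence of $\{\tilde\mu_n\}$ would yield a limit placing positive mass off $\mathrm{supp}\,\M_\infty$, contradicting (i)--(ii).

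The main obstacle is part (ii), specifically reconciling the conditioning on $\{\tau>n\}$ with the limiting dynamics. The exact recursion for $\tilde\mu_n$ pushes forward by the full kernel, then restricts to $Q_\lambda$ and renormalizes by the one-step survival probability, so the clean identity $\tilde\mu_{n+1}=P^\ast\tilde\mu_n$ holds only asymptotically, where the mass escaping $Q_\lambda$ per step becomes negligible; this is precisely where the confinement of the limit measures to the interior set $K_\lambda$ is essential. Justifying this passage to the limit, combining weak continuity of the kernel with the vanishing of the renormalization factor, is the delicate point; by contrast the supermartingale half of the argument is comparatively routine.
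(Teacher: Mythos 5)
There is nothing to compare against here: the paper does not prove this statement. It is recalled in Appendix~A as a classical result, with the proof explicitly delegated to Kushner (Sections 8.4 and 8.5) and to Liptser--Shiryayev; the theorem is then simply \emph{invoked} in the proofs of Theorem~\ref{thm:OpenLoop} and Lemma~\ref{lem:invariance}. So your proposal must be judged on its own merits as a reconstruction of Kushner's argument, and on that score it is essentially sound and follows the classical route. The first half --- stopping at $\tau$, noting $V(X_{n\wedge\tau})$ is a nonnegative supermartingale, telescoping the drift to get $\sum_n \EE{k(X_n)\mathbf{1}_{n<\tau}}\le V(x_0)$, deducing $k(X_n)\to 0$ a.s.\ on $\{\tau=\infty\}$, and invoking the standing hypothesis to conclude $X_n\to K_\lambda$ --- is exactly the standard proof and is complete.

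Two points in the measure-theoretic half deserve attention. First, when you divide by $\PP{\tau>n}$ you tacitly need $\PP{\tau=\infty}>0$; this is not free, but it follows from the paper's Theorem~\ref{thm:doob} (Doob's inequality), which gives $\PP{\tau<\infty}\le\PP{\sup_n V(X_n)\ge\lambda}\le V(x_0)/\lambda<1$ since $x_0\in Q_\lambda$ --- you should say this explicitly, as it is the only place the Doob bound enters your argument. Second, the delicate step you flag (the exact recursion for $\tilde\mu_n$ is push-forward, restrict to $Q_\lambda$, renormalize, rather than $\tilde\mu_{n+1}=P^{\ast}\tilde\mu_n$) does close the way you suggest: the total-variation distance between $\tilde\mu_{n+1}$ and $P^{\ast}\tilde\mu_n$ is controlled by $\bigl(\PP{\tau>n}-\PP{\tau>n+1}\bigr)/\PP{\tau>n+1}$, which tends to $0$ because both survival probabilities decrease to $\PP{\tau=\infty}>0$; combined with the Feller property this makes the $\omega$-limit set of $\{\tilde\mu_n\}$ invariant under $P^{\ast}$, as you claim. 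Note finally that your argument uses continuity of $k$ and weak continuity of the transition kernel, neither of which appears in the paper's loosely stated hypotheses; these are genuine hypotheses of Kushner's original theorem (and are satisfied in the paper's application, where the maps $\MM_g,\MM_e$ are continuous on $\XX$ and the jump probabilities are continuous in $\rho$), so you should state them rather than use them silently.
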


\section{Lyapunov exponents of linear stochastic processes}\label{append:Lyap-exp}

Consider a discrete-time linear stochastic system defined on $\RR^d$ by
$$
X_{k+1}=A_{s_k} X_k,
$$
where $A_{s_k}$ is a random matrix taking its values inside a finite set $\{A_1,\ldots,A_m\}$ with a stationary probability distribution for $s_k$ over $\{1,\ldots,m\}$. Then
$$
\lambda(X_0)=\lim_{k\rightarrow\infty}\frac{1}{k}\log\left( \frac{\|X_k\|}{\|X_0\|}\right),
$$
for different initial states $X_0\in \RR^d$, may take at most $d$ values which are called the Lyapunov exponents of the linear stochastic system.

\end{document}